\title{The integral cohomology of the Hilbert scheme
of two points}
\author{Burt Totaro}
\date{  }
\def\Z{\text{\bf Z}}
\def\Q{\text{\bf Q}}
\def\R{\text{\bf R}}
\def\C{\text{\bf C}}
\def\P{\text{\bf P}}
\def\F{\text{\bf F}}
\def\arrow{\rightarrow}
\def\inj{\hookrightarrow}
\def\surj{\twoheadrightarrow}
\def\tors{\text{tors}}
\def\ev{\text{ev}}
\def\odd{\text{odd}}
\def\s{\text{Sq}}
\def\Hom{\text{Hom}}
\def\BM{BM}
\begin{document}
\maketitle

\newtheorem{theorem}{Theorem}[section]

\newtheorem{corollary}[theorem]{Corollary}

\newtheorem{lemma}[theorem]{Lemma}

\theoremstyle{definition}

\newtheorem{definition}[theorem]{Definition}

\newtheorem{example}[theorem]{Example}

\theoremstyle{remark}

\newtheorem{remark}[theorem]{Remark}

For a complex manifold $X$ and a natural number $a$,
the Hilbert scheme $X^{[a]}$
(also called the Douady space) is the space of 0-dimensional
subschemes of degree $a$ in $X$. It is a compactification
of the configuration space $B(X,a)$ of $a$-element subsets of $X$.
The Hilbert scheme
is smooth if and only if $X$ has dimension at most 2
or $a\leq 3$ \cite[equation (0.2.1)]{Cheah}. The integral cohomology
of the Hilbert scheme is more subtle than the rational cohomology.
Markman computed the integral cohomology of the Hilbert schemes
$X^{[a]}$ for $X$ of dimension 2
with effective anticanonical divisor
\cite{Markman}. In this paper, we compute the mod 2
cohomology of $X^{[2]}$ for any
complex manifold $X$, and the integral cohomology of $X^{[2]}$
when $X$ has torsion-free cohomology.

In one way, things are unexpectedly good: the Hilbert scheme $X^{[2]}$
has torsion-free
cohomology if $X$ does (Theorem \ref{torsionfree}).
On the other hand, the details are intricate, and 
it was not clear that complete answers would be possible.
The behavior of the inclusion of the exceptional divisor $E_X$ into
$X^{[2]}$ is related to the Steenrod operations on the mod 2 cohomology
of $X$ (Theorem \ref{kernel}). To explain
one difficulty: some cohomology classes on $X^{[2]}$ can be defined
as the classes of $Y^{[2]}$ in $X^{[2]}$ for complex submanifolds $Y$ of $X$,
which we study in Lemma \ref{complex}.
But because the Hilbert scheme is only defined for complex manifolds,
it is harder to construct ``interesting'' classes on $X^{[2]}$
associated to arbitrary cohomology classes on $X$, for example to
odd-degree cohomology classes. 

Why look at two points? Configurations of two points come up
naturally in geometry, but one especially relevant use of
the Hilbert scheme $X^{[2]}$ is
in Voisin's paper on the universal
$CH_0$ group of cubic hypersurfaces \cite{Voisincubic}.
The background is that major recent advances have been made
in determining which algebraic varieties
are stably rational,
that is, become birational to projective space after multiplying by
projective space of some dimension \cite{Voisindouble,CTP,Totaro}.
These papers are based on the observation that
if a smooth projective variety is stably rational,
then its Chow group of 0-cycles is universally trivial,
meaning that $CH_0$ does not increase when the base field
is increased. 

The Chow group $CH_0\otimes\Q$ is universally
trivial for all rationally connected varieties, and so proving
that varieties of interest are not stably rational
requires looking at torsion
in the Chow group, with the best results
coming from 2-torsion. As a result, Voisin's work
on cubics $X$ uses information on the integral or mod 2
cohomology of the Hilbert scheme $X^{[2]}$, including results
from this paper \cite[proof of Proposition 2.6]{Voisincubic}.
A typical application is that smooth cubic 3-folds in $\C\P^4$
have $CH_0$ universally trivial
for at least a countable union of codimension-3 subvarieties
in the moduli space of cubics \cite[Theorem 1.5]{Voisincubic}.
(Smooth cubic 3-folds are all non-rational by Clemens
and Griffiths \cite{CG}, but it is wide open whether all, or some,
smooth cubic 3-folds are stably rational.)

I thank Claire Voisin for her valuable suggestions.
This work was supported by The Ambrose Monell
Foundation and Friends, via the Institute for Advanced Study,
and by NSF grant DMS-1303105.

\section{Torsion-free cohomology in even degrees}

Here we give a short proof that the Hilbert scheme $X^{[2]}$
of a compact complex manifold has torsion-free cohomology
if the cohomology of $X$ is torsion-free
and concentrated in even degrees. We show this
without the restriction to even degrees and without assuming
compactness in Theorem \ref{torsionfree},
but that proof is considerably harder.

\begin{theorem}
\label{eventorsionfree}
Let $X$ be a compact complex manifold
whose integral cohomology is
torsion-free and concentrated in even degrees.
Then the cohomology of the Hilbert
scheme $X^{[2]}$ is also torsion-free
and concentrated in even degrees.
\end{theorem}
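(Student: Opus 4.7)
The plan is to realize the Hilbert scheme as the $S_2$-quotient $X^{[2]} = Y/S_2$, where $Y = \text{Bl}_\Delta(X \times X)$ is the blow-up of $X \times X$ along the diagonal and $S_2$ acts by the swap, fixing the exceptional divisor $E \subset Y$ pointwise. The strategy is to compute $H^*(Y;\Z)$ from the classical blow-up formula, then transfer the result to $X^{[2]}$ via the quotient map $\pi:Y\to X^{[2]}$. By the blow-up formula and K\"unneth,
\[
H^*(Y;\Z) \;\cong\; H^*(X \times X;\Z) \,\oplus\, \bigoplus_{i=1}^{n-1} H^{*-2i}(X;\Z),
\]
with $n = \dim_{\C} X$, so $H^*(Y;\Z)$ is torsion-free and concentrated in even degrees under the hypothesis on $X$.

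To transfer this to $X^{[2]}$, I would use the short exact sequence of sheaves on $X^{[2]}$ associated to the branched double cover $\pi$,
\[
0 \to \Z_{X^{[2]}} \to \pi_*\Z_Y \to j_!\mathcal{L} \to 0,
\]
where $j:X^{[2]} \setminus E_X \hookrightarrow X^{[2]}$ and $\mathcal{L}$ is the rank-one sign local system on the open stratum coming from the unramified cover $Y\setminus E \to X^{[2]}\setminus E_X$. The resulting long exact sequence relates $H^*(X^{[2]};\Z)$ to $H^*(Y;\Z)$ and the twisted compactly supported group $H^*_c(X^{[2]} \setminus E_X; \mathcal{L})$. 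If $H^*_c(X^{[2]} \setminus E_X; \mathcal{L})$ is concentrated in even degrees and the map $H^{\text{even}}(Y;\Z) \to H^{\text{even}}_c(X^{[2]} \setminus E_X; \mathcal{L})$ is surjective, then the long exact sequence collapses to short exact sequences in even degrees, exhibiting $H^{\text{even}}(X^{[2]};\Z)$ as the kernel of a map between torsion-free abelian groups (hence torsion-free) and forcing $H^{\odd}(X^{[2]};\Z) = 0$.

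The main obstacle is establishing this evenness of $H^*_c(X^{[2]} \setminus E_X; \mathcal{L})$. By Poincar\'e-Lefschetz duality (using that $\mathcal{L}$ is self-dual and $X^{[2]}$ has even real dimension), this reduces to the evenness of the twisted cohomology of the configuration space $B(X,2) = (X \times X \setminus \Delta)/S_2$. I would establish this using the Cartan-Leray spectral sequence for the free double cover $X \times X \setminus \Delta \to B(X,2)$ combined with the Gysin sequence for $\Delta \subset X \times X$. The crucial input is the injectivity of the diagonal push-forward $H^*(X;\Z) \to H^{*+2n}(X \times X;\Z)$ (a consequence of Poincar\'e duality on $X$), which combined with the evenness hypothesis shows $H^*(X \times X \setminus \Delta;\Z)$ is itself torsion-free and in even degrees. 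The delicate part will be to control the $S_2$-module structure of $H^*(X \times X \setminus \Delta;\Z)$ carefully enough so that no 2-torsion is introduced when passing to the twisted cohomology of the quotient, which is where the Cartan-Leray spectral sequence could in principle introduce obstructions.
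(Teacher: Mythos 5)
Your reduction to the branched double cover is set up correctly: the sequence $0 \to \Z_{X^{[2]}} \to \pi_*\Z_Y \to j_!\mathcal{L} \to 0$ does hold, $H^*(X^{[2]};\pi_*\Z_Y)\cong H^*(Y;\Z)$ since $\pi$ is finite, and the blow-up formula does give that $H^*(Y;\Z)$ is torsion-free and even. But the proof then stops at an ``if \dots then'': everything hinges on (a) the vanishing of $H^{\odd}_c(X^{[2]}\setminus E_X;\mathcal{L})$ and (b) the surjectivity of $H^{\ev}(Y;\Z)\to H^{\ev}_c(X^{[2]}\setminus E_X;\mathcal{L})$, and neither is established. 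These are not routine verifications --- they are essentially equivalent to the theorem itself. Indeed, from your own long exact sequence, $H^{2k+1}_c(U;\mathcal{L})=\ker\bigl(H^{2k+2}(X^{[2]})\to H^{2k+2}(Y)\bigr)$ and the cokernel of the map in (b) is $H^{2k+1}(X^{[2]};\Z)$, so proving (a) and (b) directly is as hard as the original problem. The tool you propose for (a), the Cartan--Leray spectral sequence of the free $S_2$-cover $X\times X\setminus\Delta\to B(X,2)$, is exactly where the difficulty concentrates: $H^p(\Z/2;M)$ is nonzero in infinitely many degrees for essentially every $\Z[\Z/2]$-module $M$ occurring here, so the $E_2$-page is populated with $2$-torsion in all bidegrees and you give no mechanism for the odd-total-degree classes to die. (Note also that you cannot hope for $H^*_c(U;\mathcal{L})$ to be torsion-free: already for $X=\P^1$ one computes $H^4_c(U;\mathcal{L})\cong\Z/2$. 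So only evenness can be salvaged, and you would still need an argument for surjectivity in (b).)

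The gap can be closed, but by different means than the ones you name. The paper works on the other side of the blow-up square: Nakaoka--Milgram's computation of $H_*(S^2X;\Z)$ shows it is concentrated in even degrees (though not torsion-free) when $H_*(X;\Z)$ is torsion-free and even, and the blow-up exact sequence $H_iE_X\to H_iX\oplus H_iX^{[2]}\to H_iS^2X\to H_{i-1}E_X$ then forces $H_{\odd}(X^{[2]};\Z)=0$. The torsion is then killed in one stroke by the linking-form duality $H_i(X^{[2]};\Z)_{\tors}\cong H_{4n-1-i}(X^{[2]};\Z)_{\tors}$ on the closed oriented $4n$-manifold $X^{[2]}$: vanishing of odd homology forces vanishing of all torsion. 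This duality trick is the decisive idea your argument lacks; with it, you would only need evenness (not torsion-freeness) of the groups in your exact sequence, and you would not need the surjectivity in (b) as a separate input. As written, however, the proposal proves the theorem only conditionally on unproved statements that carry the full weight of the result.
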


\begin{proof}
Nakaoka and Milgram computed the integral homology
of the symmetric product $S^aX$, the quotient of $X^a$ by
the symmetric group $S_a$,
for any finite CW-complex $X$ and any natural
number $a$ \cite{Milgram}.
We now state their result on $S^2X$
when the homology of $X$ is torsion-free; Theorem \ref{symmprod}
will give their computation of the mod 2 homology of $S^2X$ for any $X$.

\begin{theorem}
\label{symmtorsionfree}
Let $X$ be a finite CW-complex
such that $H_*(X,\Z)$ is torsion-free.
Let $u_0,\ldots,u_s$ be a basis for $H_*(X,\Z)$
as a free graded abelian group. Then $H_*(S^2X,\Z)$
is the direct sum of one copy of $\Z$
in dimension $|u_i|+|u_j|$ for each $0 \leq i \leq j \leq s$
except when $i = j$ and $|u_i|$ is odd,
together with one copy of $\Z/2$ in degrees
$$|u_i|+2, |u_i|+4,\ldots, 2|u_i|-2$$
for each $i$ with $|u_i|$ even and greater than 0,
and one copy of $\Z/2$ in degrees
$$|u_i|+2, |u_i|+4,\ldots,2|u_i|-1$$
for each $i$ with $|u_i|$ odd.
\end{theorem}

Let $X$ be a compact complex manifold
whose integral homology is torsion-free
and concentrated in even degrees. Then Theorem 
\ref{symmtorsionfree} gives that $H_*(S^2X,\Z)$
is also
concentrated in even degrees, although it may have torsion.

A point of the Hilbert scheme $X^{[2]}$ represents either
an unordered pair of distinct points in $X$ or a point $x$ in $X$
together with a complex line in the tangent space $T_xX$. As a result,
the Hilbert scheme $X^{[2]}$
is related to the symmetric square $S^2X$
by a blow-up square:
$$\xymatrix{
E_X \ar[r]\ar[d] & X^{[2]}\ar[d]\\
X\ar[r] & S^2X.
}$$
Here $X \arrow S^2X$ is the diagonal inclusion. For a (real
or complex) vector bundle $V$, we use Grothendieck's
convention that $P(V)$ means
the (real or complex) projective bundle of hyperplanes in $V$,
and $O(1)$ means the quotient line bundle on $P(V)$. Then
the exceptional divisor $E_X$ is the complex
projective bundle $P(T^*X)$ of lines in the tangent bundle $TX$.
(To say that this is a blow-up square means that it is a cartesian
diagram with $X\arrow S^2X$ a closed embedding, $X^{[2]}\arrow
S^2X$ a proper morphism, and $X^{[2]}-E_X\arrow S^2X-X$ an isomorphism.)

The blow-up square gives a long exact sequence of
integral homology groups:
$$H_iE_X \arrow  H_iX\oplus H_iX^{[2]} \arrow H_iS^2X \arrow H_{i-1}E_X.$$
We know that $H_*(X,\Z)$ and $H_*(S^2X,\Z)$
are concentrated in even degrees. Since $E_X$
is the projectivization of a complex vector bundle over $X$,
its homology is also concentrated
in even degrees. So the long exact sequence
gives that $H_*(X^{[2]},\Z)$ is concentrated
in even degrees.

We also want to show that the integral homology of $X^{[2]}$
is torsion-free. Let
$n$ be the complex dimension of $X$.
Because $X^{[2]}$ is a closed oriented real manifold of
dimension $4n$, Poincar\'e duality gives a duality
between the finite abelian groups
$H_i(X^{[2]},\Z)_{\tors}$ and $H_{4n-1-i}(X^{[2]},\Z)_{\tors}$.
Since $H_{\odd}(X^{[2]},\Z)_{\tors} = 0$, it follows that
$H_{\ev}(X^{[2]},\Z)_{\tors} = 0$. Theorem \ref{eventorsionfree} is proved.
\end{proof}

Let $X$ be a compact complex manifold of dimension $n$.
The exceptional divisor
$E_X$ is known to be 2 times an element $e$
in the divisor class group $CH^1X^{[2]}$. This
follows from the existence of the double covering
$g$ from $S := \widetilde{X\times X}$ to $T := X^{[2]}$,
ramified along $E_X$. Namely, we can define $e$
to be $-c_1$ of the line bundle $(g_*O_S)/O_T$.
We also write $e$ for the associated element
of $H^2(X^{[2]},\Z)$ or $H^2(X^{[2]},\F_2)$.

The restriction of $e$ to the exceptional divisor
$E_X=P(T^*X)$ is $e=c_1O(-1)$. The cohomology
of $E_X$ with any coefficient ring is a free module
over $H^*X$ with basis $1,e,\ldots,e^{n-1}$. Let $i\colon E_X\arrow X^{[2]}$
be the inclusion, and let $\pi\colon E_X
\arrow X$ be the projection. To simplify
notation, we omit the symbol $\pi^*$ when considering cohomology
classes on $X$ pulled back to $E_X$.

\section{Main results}

\begin{theorem}
\label{kernel}
Let $X$ be a complex manifold of complex dimension $n$.
Suppose that $X$ is homeomorphic to the complement
of a closed subcomplex in a finite CW-complex; this is no restriction
for $X$ compact.
Then the kernel of the pushforward
homomorphism $i_*\colon H^*(E_X,\F_2)\arrow H^*(X^{[2]},\F_2)$
is spanned over $\F_2$ by the following elements, for $u$ in $H^*(X,\F_2)$:
\begin{align*}
e^j(e^au+e^{a-1}\s^2u+\cdots+\s^{2a}u) & \text{ for }|u|=2a,\; 0\leq j\leq n-1-a;\\
e^j(e^au+e^{a-1}\s^2u+\cdots+\s^{2a}u) & \text{ for }|u|=2a+1,\; 0\leq j\leq n-1-a;\\
e^j(e^{a-1}\s^1u+e^{a-2}\s^3u+\cdots+\s^{2a-1}u) & \text{ for }
|u|=2a,\; 0\leq j\leq n-1-a;\\
e^j(e^a\s^1u+e^{a-1}\s^3u+\cdots+\s^{2a+1}u) & \text{ for }|u|=2a+1,\;
  0\leq j\leq n-2-a.
\end{align*}
\end{theorem}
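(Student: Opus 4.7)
The plan is to compute $\ker i_*$ via the Gysin long exact sequence of $i\colon E_X \hookrightarrow X^{[2]}$ together with the cohomology of the configuration space $B(X,2) = X^{[2]}\setminus E_X$, whose mod~2 cohomology carries Steenrod operations by Nakaoka--Milgram.

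First, since the paper observes $E_X = 2e$ in the divisor class group, the normal bundle of $E_X$ in $X^{[2]}$ has $c_1 = 2e$, which vanishes mod~2. Thus $i^*\circ i_* = 0$ mod~2, and the Gysin sequence
$$H^{k-1}(B(X,2),\F_2) \xrightarrow{\partial} H^{k-2}(E_X,\F_2) \xrightarrow{i_*} H^k(X^{[2]},\F_2) \to H^k(B(X,2),\F_2)$$
identifies $\ker i_* = \im\partial$; the task reduces to computing this boundary map. I would then describe $H^*(B(X,2),\F_2)$ via the free $S_2$-action on $F(X,2) = X\times X - \Delta$, giving the Borel fibration $F(X,2) \to B(X,2) \to BS_2 = \R\P^\infty$. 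By the mod~2 Nakaoka--Milgram computation (Theorem~\ref{symmprod}), if $t \in H^1(BS_2,\F_2)$ is the generator, then each class $u \in H^m(X,\F_2)$ contributes classes on $B(X,2)$ whose structure encodes the total Steenrod square $\sum_k t^{m-k}\s^k u$.

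Next I would compute $\partial$ explicitly. Near $E_X$, the space $B(X,2)$ is locally an $\R\P^{2n-1}$-bundle (coming from $TX\setminus 0$ modulo antipodes), and under $\partial$ the class $t$ corresponds to $e = c_1(O(-1))$ on $E_X$ while $u$ corresponds to $\pi^* u$. Applied to the total-Steenrod classes, $\partial$ produces precisely the four families in the theorem: one for each combination of the parity of $|u|$ (even $2a$ versus odd $2a+1$) and the Steenrod-index parity (even $\s^{2k}$ versus odd $\s^{2k+1}$). The bounds $0 \le j \le n-1-a$ (or $n-2-a$) reflect the Leray--Hirsch basis $1, e, \ldots, e^{n-1}$ of $H^*(E_X,\F_2)$ over $H^*(X,\F_2)$, beyond which the Chern class relation $e^n + c_1(TX)e^{n-1} + \cdots + c_n(TX) = 0$ forces reductions rather than producing new kernel elements.

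A dimension count using the mod~2 blow-up formula for $H^*(X^{[2]},\F_2)$ then confirms that the four families exhaust $\ker i_*$. The main obstacle is the explicit identification of $\partial$: tracking Borel-construction classes $t^{m-k}\s^k u$ on $B(X,2)$ through the boundary map into $H^*(E_X,\F_2)$, matching them coefficient-by-coefficient with the stated $e$-polynomial Steenrod-square expressions while keeping careful bookkeeping of the two parity channels.
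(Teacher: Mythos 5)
Your overall frame---identify $\ker i_*$ with the image of the boundary map in the localization/Gysin sequence for $E_X\subset X^{[2]}$, with $X^{[2]}-E_X=S^2X-X=B(X,2)$, and then compute that boundary on a basis of $H^*(B(X,2),\F_2)$---is exactly the paper's strategy. But two of your steps have genuine gaps. First, the exhaustion argument. You propose to check that the four families exhaust $\ker i_*$ by ``a dimension count using the mod $2$ blow-up formula for $H^*(X^{[2]},\F_2)$,'' but no such formula is available: $X^{[2]}$ is the $S_2$-quotient of the blow-up, its mod $2$ cohomology is precisely the unknown being computed (and Example \ref{enriques} shows it is not determined by the $\F_2$-Betti numbers of $X$), so this step is circular. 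The correct route is to exhibit a full basis of $H^{*}(B(X,2),\F_2)$ and compute $\partial$ on every basis element. Your description of $H^*(B(X,2),\F_2)$ via the Borel fibration records only the ``diagonal'' classes built from $t^{|u|-k}\s^k u$; it omits the off-diagonal classes $g_*(v_i\otimes v_j)$, $i<j$ (pushforwards under the double cover $X\times X-X\to S^2X-X$), which make up the rest of the basis (Theorem \ref{config}) and whose boundary must be shown to vanish---the paper does this by observing that they extend to $X^{[2]}$ as pushforwards from the blow-up $\widetilde{X\times X}$, hence lie in the image of restriction and die under $\partial$. Without both halves you establish neither inclusion of the asserted equality.

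Second, the computation of $\partial$ itself is asserted rather than performed, and the one concrete statement you make about it is wrong: $t$ has degree $1$ and $e$ has degree $2$, so ``the class $t$ corresponds to $e$'' cannot be right. What actually happens is that the degree-one class $b$ (the pullback of $t$) restricts on the boundary $P_{\R}(T^*X)$ of $B(X,2)$ to $w_1(O(1)_{\R})$, whose \emph{square} is $h^*e$ for the $S^1$-bundle $h\colon P_{\R}(T^*X)\to P_{\C}(T^*X)=E_X$, and $\partial$ is fiber integration over that $S^1$, with $h_*1=0$ and $h_*b=1$; this parity in $b$ is precisely what splits the answer into the four families and produces the bounds on $j$ (which come from the range $0\le j\le 2n-1-|u|$ in the basis $b^j[S^2Z-Z]$, not from the Chern relation $e^n+c_1e^{n-1}+\cdots=0$). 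Moreover, getting the Steenrod squares to appear requires representing $u$ by a (pseudo)manifold $Z$, identifying the class of $P_{\R}(T^*Z)$ in $P_{\R}(T^*X)$ as $b^ru+b^{r-1}\s^1u+\cdots+\s^ru$ via the top Stiefel--Whitney class of $O(1)\otimes N_{Z/X}$ together with Thom's formula $\s^ju=s_*w_j(N_{Z/X})$, and then a stabilization (embedding $Z$ properly in $X\times\P^N$) to handle classes not represented by submanifolds. None of this bookkeeping is in your outline, and it is where the specific polynomials in the theorem actually come from; citing Theorem \ref{symmprod} (a vector-space basis for the homology of $S^2X$, with no Steenrod structure attached) does not supply it.
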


We have a localization exact sequence, in particular with
$\F_2$ coefficients:
$$\arrow H^{j+1}X^{[2]}\arrow H^{j+1}(S^2X-X)\arrow H^jE_X\arrow H^{j+2}X^{[2]}
\arrow$$
Moreover, the $\F_2$-Betti numbers of $E_X$ and $S^2X-X$ are determined
by those of $X$ (see Theorem \ref{config} for $S^2X-X$). So Theorem
\ref{kernel} determines the $\F_2$-Betti numbers of $X^{[2]}$ in terms
of the action of Steenrod operations on $H^*(X,\F_2)$. The description
is complicated, but this is unavoidable: Example \ref{enriques}
shows that the $\F_2$-Betti numbers of $X^{[2]}$
are not determined by the $\F_2$-Betti numbers of $X$, in general.

On the other hand, the following result implies that
$X^{[2]}$ has several good properties when the integral
cohomology of $X$ has no 2-torsion;
in particular, its $\F_2$-Betti numbers are determined by those of $X$
in that case.

\begin{theorem}
\label{torsionfree}
Let $X$ be a complex manifold of complex dimension $n$
whose integral cohomology has no
2-torsion. Suppose that $X$ is homeomorphic to the complement
of a closed subcomplex in a finite CW-complex; this is no restriction
for $X$ compact.
Then a basis over $\F_2$ for the kernel of the pushforward homomorphism
$i_*\colon H^*(E_X,\F_2)\arrow H^*(X^{[2]},\F_2)$
is given by the elements:
\begin{align*}
e^j(e^au+e^{a-1}\s^2u+\cdots+\s^{2a}u) & \text{ for }|u|=2a,\; 0\leq j\leq n-1-a;\\
e^j(e^au+e^{a-1}\s^2u+\cdots+\s^{2a}u) & \text{ for }|u|=2a+1,\; 0\leq j\leq n-1-a,
\end{align*} 
where $u$ runs through a basis for $H^*(X,\F_2)$.

Moreover, if the integral cohomology of $X$ has no 2-torsion (resp.\ no torsion),
then the integral cohomology of $X^{[2]}$ has no 2-torsion (resp.\ no torsion).
\end{theorem}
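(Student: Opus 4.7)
\textit{Part 1.} The first statement follows from specializing Theorem \ref{kernel}. The hypothesis that $H^*(X,\Z)$ has no 2-torsion forces the mod 2 Bockstein $\s^1$ to vanish on $H^*(X,\F_2)$, because the image of $\s^1$ captures precisely the 2-torsion of integral cohomology. Via the Adem relation $\s^{2i+1}=\s^1\s^{2i}$, every odd-index square $\s^{2i+1}$ then vanishes on $H^*(X,\F_2)$. Substituting into Theorem \ref{kernel} kills the third and fourth families, leaving the first two. To upgrade spanning to a basis, I would use the top $e$-term: each remaining element $e^j(e^au+e^{a-1}\s^2u+\cdots+\s^{2a}u)$ has leading monomial $e^{j+a}u$, and since $j+a$ determines $a$ (through $|u|$) and conversely, these leading monomials are distinct elements of the free $H^*(X,\F_2)$-module basis $\{1,e,\ldots,e^{n-1}\}$ of $H^*(E_X,\F_2)$, forcing independence.

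\textit{Part 2, no 2-torsion.} The plan is to show $\dim_{\F_2}H^*(X^{[2]},\F_2)=\dim_{\Q}H^*(X^{[2]},\Q)$, which by the universal coefficient theorem is equivalent to the absence of 2-torsion in $H^*(X^{[2]},\Z)$. For the mod 2 Betti numbers, plug the Part 1 description of $\ker i_*$ into the localization sequence
$$\cdots\arrow H^k(X^{[2]},\F_2)\arrow H^k(S^2X-X,\F_2)\arrow H^{k-1}(E_X,\F_2)\xrightarrow{i_*}H^{k+1}(X^{[2]},\F_2)\arrow\cdots,$$
using Theorem \ref{config} to identify $H^*(S^2X-X,\F_2)$ and the projective bundle formula for $H^*(E_X,\F_2)=H^*(X,\F_2)[e]/(e^n)$. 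For the rational Betti numbers, use the rational blow-up formula $H^*(X^{[2]},\Q)\cong H^*(S^2X,\Q)\oplus\bigoplus_{k=1}^{n-1}e^k\cdot H^*(X,\Q)$ together with $H^*(S^2X,\Q)\cong\mathrm{Sym}^2_{\mathrm{gr}}H^*(X,\Q)$, which is valid since $S^2X$ is a $\Q$-homology manifold. Under the no 2-torsion hypothesis, $\dim_{\F_2}H^j(X,\F_2)=\dim_{\Q}H^j(X,\Q)$ for each $j$, and the two Poincar\'e polynomials of $X^{[2]}$ should then coincide after a combinatorial check.

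\textit{Part 2, no torsion.} To upgrade no 2-torsion to no torsion, I would treat odd primes $p$ separately. Since $|S_2|=2$ is invertible in $\Z_{(p)}$, a transfer argument yields $H^*(S^2X,\Z_{(p)})\cong\bigl(H^*(X,\Z_{(p)})^{\otimes 2}\bigr)^{S_2}$, a direct summand of the torsion-free $H^*(X,\Z_{(p)})^{\otimes 2}$, hence itself torsion-free. The blow-up formula localized at $p$ then gives $H^*(X^{[2]},\Z_{(p)})$ torsion-free. Combined with the $p=2$ result, $H^*(X^{[2]},\Z)$ is torsion-free.

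\textit{Main obstacle.} The hardest step is the combinatorial Betti number match in the no 2-torsion case. Theorem \ref{config}'s formula for $H^*(S^2X-X,\F_2)$ itself contains Steenrod-type corrections, and aligning the resulting alternating sum through the localization sequence against the symmetric square $\mathrm{Sym}^2_{\mathrm{gr}}H^*(X,\Q)$ requires careful bookkeeping. The vanishing of odd Steenrod squares on $H^*(X,\F_2)$ is essential to the match: Example \ref{enriques} (mentioned in the text) shows that without it, $X^{[2]}$ can carry strictly more mod 2 cohomology than rational, so no purely formal argument can succeed.
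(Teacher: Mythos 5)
Your proposal is correct and follows essentially the same route as the paper: vanishing of odd Steenrod squares via the Adem relation to reduce Theorem \ref{kernel}, independence from the free-module structure of $H^*(E_X,\F_2)$ over $H^*(X,\F_2)$, a comparison of mod 2 and rational Betti numbers to rule out 2-torsion, and invertibility of 2 to handle odd torsion. The only cosmetic difference is that the paper computes $H^*(X^{[2]},\Q)$ as the $S_2$-invariants of the blow-up $\widetilde{X\times X}$ rather than invoking a blow-up formula over the singular space $S^2X$, and the final combinatorial match of Poincar\'e polynomials is more routine than you suggest, since the degrees in Theorem \ref{config} involve no Steenrod corrections.
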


The following statement is used in Voisin's paper
on cubic hypersurfaces. It is proved there
in the case of odd-degree complete intersections
in projective space \cite[Lemma 2.8]{Voisincubic}.

\begin{corollary}
Let $X$ be a compact complex manifold whose integral cohomology
has no 2-torsion.
Let $k \geq l$ be integers, and let $\alpha$
be an element of $H^{2k}(E_X,\Z)$
of the form
$$\alpha = e^{k-l}\beta_l + e^{k-l-1}\beta_{l+1} +\cdots$$
with $\beta$ in $H^{2j}(X,\Z)$. If $i_*\alpha$ is divisible by 2
in $H^{2k+2}(X^{[2]},\Z)$ and $2l > k$, then $\beta_l$
is divisible by 2 in $H^{2l}(X,\Z)$.
\end{corollary}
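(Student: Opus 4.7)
The plan is to reduce the integral divisibility statement to a mod~$2$ statement via the Bockstein, then exploit the explicit $\F_2$-basis for $\ker(i_*)$ given by Theorem~\ref{torsionfree}. Since $X$ has no $2$-torsion, that theorem also gives no $2$-torsion for $X^{[2]}$; and for a space without $2$-torsion, a class is divisible by $2$ if and only if its mod~$2$ reduction vanishes. So the hypothesis becomes $\bar\alpha \in \ker(i_*\colon H^*(E_X,\F_2) \to H^*(X^{[2]},\F_2))$, and what we must prove is $\bar\beta_l = 0$ in $H^{2l}(X,\F_2)$.

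The next step is to expand $\bar\alpha$ in the basis for $\ker(i_*)$ from Theorem~\ref{torsionfree}. Since $\bar\alpha$ has even degree $2k$, the odd-$u$ basis elements (which lie in odd degree) contribute nothing, and the degree constraint $2j + 4a = 2k$ forces $j = k - 2a$; in particular only $v \in H^{2a}(X,\F_2)$ with $a \leq k/2$ can appear. Writing
\begin{equation*}
\bar\alpha \;=\; \sum_{a} \sum_{v \in B_a} \lambda_v \sum_{i=0}^{a} e^{k-a-i}\s^{2i} v
\end{equation*}
for chosen $\F_2$-bases $B_a$ of $H^{2a}(X,\F_2)$, and matching the coefficient of $e^{k-m}$ via the free $H^*(X,\F_2)$-module structure of $H^*(E_X,\F_2)$ on $1, e, \ldots, e^{n-1}$, I get the key identity
\begin{equation*}
\bar\beta_m \;=\; \sum_{m/2 \,\leq\, a \,\leq\, \min(m,\, k/2)} \sum_{v \in B_a} \lambda_v \cdot \s^{2(m-a)} v,
\end{equation*}
with $\bar\beta_m = 0$ for $m < l$ by the prescribed shape of $\alpha$.

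The heart of the argument is then a triangular induction on $m = 0, 1, \ldots, \lfloor k/2 \rfloor$. The hypothesis $2l > k$ is exactly $\lfloor k/2 \rfloor < l$, so throughout the induction $\bar\beta_m = 0$. At level $m$, the $a = m$ contribution to the right-hand side is $\sum_{v \in B_m}\lambda_v\,v$, while the contributions with $a < m$ vanish by the inductive hypothesis, so the equation forces $\lambda_v = 0$ for every $v \in B_m$. Once the induction exhausts $m \leq \lfloor k/2 \rfloor$, every coefficient $\lambda_v$ in the expansion has been killed; substituting $m = l$ into the identity (whose index set is contained in $a \leq k/2$) then yields $\bar\beta_l = 0$, as required.

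The main obstacle is really combinatorial rather than conceptual: verifying that the $a = m$ ``diagonal'' column of the expansion is always present for each step of the cascade (which needs $m \leq k/2$), and checking that the strict inequality $\lfloor k/2 \rfloor < l$ furnished by $2l > k$ is precisely what is needed to drive the cascade all the way before reaching the level $m = l$ at which $\bar\beta_l$ is extracted.
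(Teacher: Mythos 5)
Your proof is correct and takes essentially the same route as the paper: reduce mod~2, note that the hypothesis places $\bar\alpha$ in $\ker\bigl(i_*\colon H^*(E_X,\F_2)\to H^*(X^{[2]},\F_2)\bigr)$, and apply the explicit basis for that kernel from Theorem~\ref{torsionfree} (only the even-degree family contributes in degree $2k$). The paper leaves the concluding linear algebra implicit, and your triangular induction on the coefficient of $e^{k-m}$, driven by $\lfloor k/2\rfloor < l$, is precisely the missing deduction; the only superfluous remark is that ``divisible by $2$ iff zero mod $2$'' already follows from the Bockstein exact sequence for any space, so that step does not need the $2$-torsion-free hypothesis (which is used only to invoke Theorem~\ref{torsionfree}).
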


\begin{proof}
Consider $\alpha$ as a class in $H^{2k}(E_X,\F_2)$.
We are assuming that $\alpha$ is in the
kernel of $i_*\colon H^{2k}(E_X,\F_2)\arrow
H^{2k+2}(X^{[2]},\F_2)$. The kernel of $i_*$
on $H^*(E_X,\F_2)$ is
computed in Theorem \ref{torsionfree},
which implies the conclusion here.
\end{proof}

\begin{example}
\label{enriques}
We give an example of compact complex manifolds $X$ and $Y$
with the same $\F_2$-Betti numbers such that $X^{[2]}$
and $Y^{[2]}$ do not have the same $\F_2$-Betti numbers.
First, let $W\arrow \P^1$ be a minimal rational elliptic
surface with section, for example defined by blowing up the intersection
of two cubic curves in $\P^2$. Then $W$ has second Betti number
equal to 10. By Ogg and Shafarevich, for any finite sequence
of integers $m_1,\ldots,m_r\geq 2$, there is a smooth projective
elliptic surface
over $\P^1$ which is a principal homogeneous space for $W\arrow \P^1$
outside $r$ points in $\P^1$ and which has multiple fibers
with multiplicity $m_1,\ldots,m_r$ at those points
\cite[Theorem III.6.12]{FM}. Such a surface automatically has $b_2=10$,
since $b_2(W)=10$
\cite[Lemma I.3.18, Proposition I.3.21, Theorem I.6.7]{FM}.
Let $X$ and $Y$ be such elliptic surfaces
with multiple fibers of multiplicities $2,2$ and $4,4$, respectively.
Then $\pi_1(X)\cong \Z/2$ and $\pi_1(Y)\cong \Z/4$ \cite[Theorem I.2.3]{FM}.
Here $X$ is an Enriques surface and $Y$ has Kodaira dimension 1.

By Poincar\'e duality and the universal coefficient theorem,
the integral cohomology groups of $X$
and $Y$ are:
$$\begin{array}{cccccc}
  & 0 & 1 & 2 & 3 & 4\\
X & \Z & 0 & \Z^{10}\oplus \Z/2 & \Z/2 & \Z\\
Y & \Z & 0 & \Z^{10}\oplus \Z/4 & \Z/4 & \Z
\end{array}$$
It follows that the Enriques surface $X$ and the surface $Y$
have the same $\F_2$-Betti numbers:
1, 1, 12, 1, 1. Because
the Bockstein $\s^1$ is zero on $H^*(Y,\F_2)$ but not on $H^*(X,\F_2)$,
$Y^{[2]}$ has smaller $\F_2$-Betti numbers than $X^{[2]}$. Explicitly,
by Theorem \ref{kernel}, the $\F_2$-Betti numbers are:
$$\begin{array}{cccccccccc}
  & 0 & 1 & 2 & 3 & 4 & 5&6&7&8\\
X^{[2]} & 1 &1&13&15&94&15&13&1&1\\
Y^{[2]} & 1 &1&13&14&92&14&13&1&1
\end{array}$$
\end{example}

\section{The boundary map}

Recall the localization exact sequence with
$\F_2$ coefficients:
$$H^{j+1}X^{[2]}\arrow H^{j+1}(S^2X-X)\arrow H^jE_X\arrow H^{j+2}X^{[2]}.$$
The key step in determining the kernel
of the pushforward $i_*\colon H^jE_X\arrow H^{j+2}X^{[2]}$
is to compute the boundary homomorphism on interesting elements
of $H^{j+1}(S^2X-X)$, as we now do.

\begin{lemma}
\label{boundarymfd}
Let $Z$ be a closed $C^{\infty}$ submanifold of real codimension $r$
in a complex manifold $X$. Let $u$ be the cohomology class of $Z$
in $H^r(X,\F_2)$. Then the boundary in $H^{2r-1}(E_X,\F_2)$
of the class $[S^2Z-Z]$
in $H^{2r}(S^2X-X,\F_2)$ is
$$\begin{cases}
e^{a-1}\s^1u+e^{a-2}\s^3u+\cdots+\s^{2a-1}u&\text{if }r=2a,\\
e^au+e^{a-1}\s^2u+\cdots+\s^{2a}u&\text{if }r=2a+1.
\end{cases}$$
\end{lemma}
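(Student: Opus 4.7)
The approach is to interpret the problem $S_2$-equivariantly and reduce the boundary computation to Steenrod's classical formula for the equivariant external square. Since $S_2$ acts freely on $X\times X - \Delta$ with quotient $S^2X - X$, we have $H^*(S^2X - X, \F_2) = H^*_{S_2}(X\times X - \Delta, \F_2)$, and $[S^2Z - Z]$ is the restriction to $X\times X - \Delta$ of the equivariant external square $u\boxtimes u \in H^{2r}_{S_2}(X\times X, \F_2)$. The Hilbert scheme $X^{[2]}$ is the $S_2$-quotient of the blowup $\pi:\widetilde{X\times X}\arrow X\times X$ of the diagonal; the exceptional divisor $\widetilde E \cong E_X = P(T^*X)$ has trivial $S_2$-action, while $S_2$ acts by $-1$ on its normal bundle $O(-1)$.

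The key classical input is Steenrod's original formula,
\[
u\boxtimes u\big|_\Delta = \sum_{i=0}^{r} w^{r-i}\, \s^i u \in H^{2r}_{S_2}(X,\F_2) = H^{2r}(X,\F_2)[w],
\]
with $w\in H^1(BS_2,\F_2)$. Pulling back along $\pi_E:\widetilde E = P(T^*X)\arrow X$ gives $\sum_{i=0}^{r} w^{r-i}\s^i u$ as an equivariant class on $\widetilde E$. The next step is to compare the $S_2$-equivariant Gysin sequence for $\widetilde E\subset \widetilde{X\times X}$ with the non-equivariant Gysin sequence for $E_X\subset X^{[2]}$, using that $\widetilde{X\times X}\arrow X^{[2]}$ is a double cover ramified along $E_X$ with $S_2$ acting as $-1$ on the normal bundle. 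The $S_2$-equivariant first Chern class of the normal bundle of $\widetilde E$ is $e+w^2$, whereas the non-equivariant Euler class of the normal bundle of $E_X$ in $X^{[2]}$ is $2e=0$ mod $2$. The expected outcome is that only odd-power-of-$w$ terms contribute to the non-equivariant boundary on $X^{[2]}$, via the substitution $w^{2k+1}\mapsto e^k$, $w^{2k}\mapsto 0$, yielding
\[
\partial[S^2Z - Z] = \sum_{k\geq 0,\; 2k+1\leq r} e^k\, \s^{r-2k-1} u,
\]
which specializes to $e^{a-1}\s^1 u + e^{a-2}\s^3 u + \cdots + \s^{2a-1} u$ for $r = 2a$ and to $e^a u + e^{a-1}\s^2 u + \cdots + \s^{2a} u$ for $r = 2a+1$, matching both cases of the lemma.

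The main obstacle is to justify rigorously the substitution rule $w^{2k+1}\mapsto e^k$, $w^{2k}\mapsto 0$, which encodes how the equivariant Steenrod expansion descends through the boundary map to the non-equivariant cohomology of $X^{[2]}$, and ultimately reflects the ramified double cover structure. I expect this to reduce to a local computation on a tubular neighborhood of $E_X$ in $X^{[2]}$, where one compares equivariant and non-equivariant Thom/Gysin maps explicitly using the equivariant Euler class $e + w^2$; as a sanity check, for complex $Z$ the odd Steenrod squares on $u=[Z]$ vanish by Wu's formula, which agrees with the geometric fact that the closure of $S^2Z-Z$ meets $E_X$ in the higher-codimension locus $P_\C(TZ)\subset P_\C(TX)$.
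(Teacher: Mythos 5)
Your strategy is viable and reaches the correct answer, but it takes a genuinely different route from the paper in its first half, and it defers the one step where the real computation lives. The paper works entirely non-equivariantly: it compactifies $S^2X-X$ to a manifold with boundary $P_{\R}(T^*X)$, identifies the boundary of $[S^2Z-Z]$ with $t_*1$ for $t\colon P_{\R}(T^*Z)\arrow P_{\C}(T^*X)$, computes the class of $P_{\R}(T^*Z)$ in $P_{\R}(T^*X)$ as the top Stiefel--Whitney class $w_r(O(1)\otimes N_{Z/X})=b^r+b^{r-1}w_1N_{Z/X}+\cdots$, and converts normal Stiefel--Whitney classes into Steenrod squares via Thom's formula $\s^ju=s_*w_j(N_{Z/X})$. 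You instead invoke Steenrod's formula $P(u)|_{\Delta}=\sum_i w^{r-i}\s^iu$ for the equivariant square; these are two faces of the same classical fact, but your version has a real advantage: it applies to arbitrary classes $u$, not just submanifold classes, so if carried out it would absorb the paper's Lemma \ref{boundary} (which otherwise needs a separate Whitney-embedding argument in $X\times\P^N$). One small correction to your setup: what you need is not the pullback of $P(u)|_\Delta$ to $\widetilde{E}$ but its restriction to the link of $E_X$ in $X^{[2]}$, i.e.\ to $S_{\R}(TX)/\pm=P_{\R}(T^*X)$, under which $w$ becomes $b=w_1(O(1)_{\R})$ and classes from $X$ pull back; the boundary map is that restriction followed by integration over the $S^1$-fiber of $h\colon P_{\R}(T^*X)\arrow P_{\C}(T^*X)$.

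That last integration is exactly your ``substitution rule,'' and you should be aware that it is not a formality to be expected but the entire computational content of the lemma; as written, your proposal states the answer it hopes to justify. The rule does hold, and the justification is short: $H^*(P_{\R}(T^*X),\F_2)$ is free over $H^*(P_{\C}(T^*X),\F_2)$ on $1,b$ because $h^*e=b^2$ (as $O(1)_{\C}$ pulls back to $O(1)_{\R}\otimes_{\R}\C$), so by the projection formula it suffices to compute $h_*1=0$ (degree reasons) and $h_*b=1$ (restrict to a fiber $\R\P^{2n-1}\arrow\C\P^{n-1}$, where the hyperplane $\R\P^{2n-2}$ maps with odd degree). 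This gives $h_*(b^{2k}x)=e^kh_*x=0$ and $h_*(b^{2k+1}x)=e^kx$ for $x$ pulled back from $E_X$, which is precisely $w^{2k}\mapsto 0$, $w^{2k+1}\mapsto e^k$. Your proposed detour through a comparison of the $S_2$-equivariant Gysin sequence of $\widetilde{E}\subset\widetilde{X\times X}$ (with equivariant Euler class $e+w^2$) against the non-equivariant one for $E_X\subset X^{[2]}$ is a heavier framework than needed and would still bottom out in the same circle-bundle pushforward; I would replace it by the direct computation above. Your sanity check for complex $Z$ is correct and consistent with both cases of the formula.
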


\begin{proof}
We can view $S^2Z-Z$ as the interior of a manifold
with boundary, where the boundary is the real
projective bundle $P_{\R}(T^*Z)$ over $Z$. So the boundary
of $[S^2Z-Z]$ is the class $t_*1$ in $H^{2r-1}(E_X,\F_2)$,
where $t$ is the proper map $P_{\R}(T^*Z)\arrow E_X=P_{\C}(T^*X)$,
taking a real line in $T_Z$ at a point $z$ in $Z$
to the complex line that it spans in $T_zX$.

We can factor $t$ as $P_{\R}(T^*Z)\inj P_{\R}(T^*X)|_Z \inj
P_{\R}(T^*X)\surj P_{\C}(T^*X)$. Write $\rho\colon P_{\R}(T^*X)|_Z
\arrow Z$ for the projection. Then
$P_{\R}(T^*Z)$ is the zero set of a transverse section
of the real vector bundle $\Hom(O(-1),\rho^*N_{Z/X})$
over $P_{\R}(T^*X)|_Z$; that section is the one associated
to the subbundle $O(-1) \subset \rho^*TX|_Z$.
So the cohomology class of $P_{\R}(T^*Z)$
on $P_{\R}(T^*X)|_Z$ is the top Stiefel-Whitney class
$w_{r}(O(1)\otimes \rho^*N_{Z/X})$.
The top Stiefel-Whitney class of the tensor product
of a line bundle $L$ with a vector bundle
$W$ of rank $r$ is
$$w_{r}(L\otimes 
W) = (w_1L)^r + (w_1L)^{r-1}w_1W +\cdots+ w_rW.$$
Write $b$ for the class $w_1O(1)$ in $H^1(P_{\R}(TX),\F_2)$.
We deduce that the $\F_2$-cohomology class of $P_{\R}(T^*Z)$ on
$P_{\R}(T^*X)|_Z$ is $b^{r} + b^{r-1}w_1N_{Z/X} +\cdots+w_{r}N_{Z/X}$,
where we omit the symbol $\rho^*$ for cohomology classes
on $Z$ pulled back to $P_{\R}(T^*X)|_Z$.

Write $s$ for the inclusion $Z\inj X$, and $u$ for the cohomology
class $s_*1=[Z]$ in $H^{r}(X,\F_2)$. Then the pushforward
of the class of $P_{\R}(T^*Z)$ from $P_{\R}(T^*X)|_Z$ to $P_{\R}(T^*X)$
is $b^{r}u+b^{r-1}s_*w_1N_{Z/X}+\cdots+s_*w_{r}N_{Z/X}$.
The Steenrod squares of the class $u = [Z]$ in $H^*(X,\F_2)$
are the pushforward to $X$ of the Stiefel-Whitney classes
of the normal bundle $N_{Z/X}$ by the inclusion
$s\colon Z\arrow X$,
$$\s^ju = s_*w_j(N_{Z/X}),$$
by Thom \cite{Thomsteenrod}. So the class of $P_{\R}(T^*Z)$
on $P_{\R}(T^*X)$ is $b^ru+b^{r-1}\s^1u+\cdots+\s^ru$.

Finally, we have to push this class forward via the
$S^1$-bundle $h\colon P_{\R}(T^*X)\arrow P_{\C}(T^*X)$.
(We sometimes write $O(1)_{\R}$ instead of $O(1)$
for the natural real line bundle
on $P_{\R}(T^*X)$, and likewise $O(1)_{\C}$ instead for $O(1)$ for the natural
complex line bundle on $P_{\C}(T^*X)$, to avoid confusion.)
The class $e=c_1O(-1)$ in $H^2(P_{\C}(T^*X),\F_2)$ pulls back
to $b^2$, since the complex line bundle $O(1)_{\C}$ on $P_{\C}(T^*X)$
pulls back to $O(1)_{\R}\otimes_{\R}\C$ on $P_{\R}(T^*X)$,
and $c_1(O(1)_{\R}\otimes_{\R}\C)=w_1(O(1)_{\R})^2=b^2$.
Also, all classes on $P_{\R}(T^*X)$ pulled back from $H^*(X,\F_2)$
(such as the classes $\s^ju$) are also pulled back
from $P_{\C}(T^*X)$. Here $H^*(P_{\R}(T^*X),\F_2)$
is a free module over $H^*(X,\F_2)$ with basis
$1,b,\ldots,b^{2n-1}$, where $n$ is the complex
dimension of $X$. So to compute the pushforward $h_*$
on $\F_2$-cohomology, it suffices to compute $h_*1$
and $h_*b$. Here $h_*1$ is in $H^{-1}(P_{\C}(T^*X),\F_2)=0$,
and so $h_*1=0$. Also, $h_*b$ is in $H^0(P_{\C}(T^*X),\F_2)$,
and so it is either 0 or 1. In fact, $h_*b=1$. This can be proved
by restricting over a point in $X$, and noting
that the inclusion of a hyperplane $b=[\R\P^{2n-2}]$ in
$\R\P^{2n-1}$ composed with the surjection to $\C\P^{n-1}$
has degree $1\pmod{2}$, as it restricts
to a diffeomorphism from $\R^{2n-2}$ to $\C^{n-1}$.

Therefore, for $Z$ of codimension $r=2a$, the boundary
of $[S^2Z-Z]$ in $H^{4a-1}(E_X,\F_2)$ is
$$h_*(b^{2a}u+b^{2a-1}\s^1u+\cdots+\s^{2a}u) =
e^{a-1}\s^1u+e^{a-2}\s^3u+\cdots+\s^{2a-1}u.$$
For $Z$ of codimension $r=2a+1$, the boundary of
$[S^2Z-Z]$ in $H^{4a+1}(E_X,\F_2)$ is
$$h_*(b^{2a+1}u+b^{2a}\s^1u+\cdots+\s^{2a+1}u)=
e^au+e^{a-1}\s^2u+\cdots+\s^{2a}u.$$
\end{proof}

Let $b$ be the element of $H^1(S^2X-X,\F_2)$ associated
to the double cover $X\times X-X\arrow S^2X-X$. 

\begin{lemma}
\label{boundaryb}
Let $Z$ be a closed $C^{\infty}$ submanifold of real codimension $r$
in a complex manifold $X$. Let $u$ be the cohomology class
of $Z$ in $H^r(X,\F_2)$. Then the boundary in $H^{2r}(E_X,\F_2)$
of the product $b[S^2Z-Z]$
in $H^{2r+1}(S^2X-X,\F_2)$ is
$$\begin{cases}
e^{a}u+e^{a-1}\s^2u+\cdots+\s^{2a}u &\text{if }r=2a,\\
e^a\s^1u+e^{a-1}\s^3u+\cdots+\s^{2a+1}u&\text{if }r=2a+1.
\end{cases}$$
\end{lemma}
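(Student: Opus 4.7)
The plan is to imitate the proof of Lemma \ref{boundarymfd} and reduce the computation to a pushforward on $P_{\R}(T^*X)$; the one new input is a formula for the restriction of $b$ to a tubular neighborhood boundary. I would first describe $\delta\colon H^*(S^2X-X,\F_2)\arrow H^{*-1}(E_X,\F_2)$ as the composition of restriction to the boundary $\partial N$ of a tubular neighborhood $N$ of $E_X$ in $X^{[2]}$, followed by the Gysin pushforward along the $S^1$-bundle $\partial N\arrow E_X$. Here $\partial N$ is the unit circle bundle of the normal bundle $O(E_X)|_{E_X}=O(-2)_{\C}$ on $E_X=P_{\C}(T^*X)$.

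The key new ingredient is that the squaring map on fibers identifies $\partial N=S(O(-2)_{\C})$ with $S(O(-1)_{\C})/\{\pm 1\}=P_{\R}(T^*X)$, under which the Gysin projection becomes the map $h\colon P_{\R}(T^*X)\arrow P_{\C}(T^*X)$ from Lemma \ref{boundarymfd}. Moreover, the double cover $X\times X-X\arrow S^2X-X$ extends near $E_X$ to the ramified cover $\widetilde{X\times X}\arrow X^{[2]}$, and its restriction to $\partial N$ is exactly the squaring double cover $S(O(-1)_{\C})\arrow P_{\R}(T^*X)$. A direct check shows that the associated real line bundle of this $\Z/2$-cover is the tautological bundle $O(-1)_{\R}$, so its classifying class is $w_1(O(1)_{\R})=b$. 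Hence $b|_{\partial N}$ coincides with the class $b\in H^1(P_{\R}(T^*X),\F_2)$ used in Lemma \ref{boundarymfd}.

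In the same local coordinates as in Lemma \ref{boundarymfd}, $\overline{S^2Z-Z}$ meets $\partial N$ in a copy of $P_{\R}(T^*Z)$ which, under $\partial N\cong P_{\R}(T^*X)$, is the natural inclusion $P_{\R}(T^*Z)\subset P_{\R}(T^*X)$. Therefore $[S^2Z-Z]|_{\partial N}$ equals the cohomology class of $P_{\R}(T^*Z)$ in $P_{\R}(T^*X)$, which that proof showed to be $b^ru+b^{r-1}\s^1u+\cdots+\s^ru$. Combining everything,
\[
\delta(b\cdot[S^2Z-Z])=h_*\bigl(b^{r+1}u+b^r\s^1u+\cdots+b\,\s^ru\bigr),
\]
and the pushforward formulas $h_*b^{2k}=0$, $h_*b^{2k+1}=e^k$ (which follow from $h^*e=b^2$ and the projection formula, exactly as in Lemma \ref{boundarymfd}) yield the stated formulas upon separating terms by parity of the $b$-exponent. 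The main obstacle is the identification of $b|_{\partial N}$ with $w_1(O(1)_{\R})$; once that is in place, the rest is Lemma \ref{boundarymfd} with one extra factor of $b$.
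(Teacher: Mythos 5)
Your proposal is correct and follows essentially the same route as the paper: identify the boundary map with restriction to $P_{\R}(T^*X)$ (the boundary of a collar of $E_X$) followed by the Gysin pushforward $h_*$ of the circle bundle $h\colon P_{\R}(T^*X)\arrow P_{\C}(T^*X)$, check that $b$ restricts to $w_1(O(1)_{\R})$ and that $[S^2Z-Z]$ restricts to $b^ru+b^{r-1}\s^1u+\cdots+\s^ru$, then apply $h_*b^{2k}=0$, $h_*b^{2k+1}=e^k$. Your extra care in justifying the identification $\partial N\cong S(O(-2)_{\C})\cong P_{\R}(T^*X)$ and the computation of $b|_{\partial N}$ only makes explicit what the paper leaves implicit.
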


\begin{proof}
We can think of $S^2X-X$ as the interior of a real manifold with
boundary, where the boundary is the real projective bundle
$P_{\R}(T^*X)$. Then the element $b$ in $H^1(S^2X-X,\F_2)$
restricts to the class $b=w_1(O(1)_{\R})$ on $P_{\R}(T^*X)$.

As in the proof of Lemma \ref{boundarymfd}, the boundary in $H^{2r}(E_X,\F_2)$
of the product $b[S^2Z-Z]$ is the pushforward of the cohomology class
$b$ on $P_{\R}(T^*Z)$ via the map $P_{\R}(T^*Z)\inj P_{\C}(T^*X)$.
We can factor that map as $P_{\R}(T^*Z)\inj P_{\R}(T^*X)\surj P_{\C}(T^*X)$,
and the class $b$ is pulled back from $P_{\R}(TX)$. So the pushforward
of $b$ to $P_{\R}(T^*X)$ is $b$ times the class of $P_{\R}(T^*Z)$ on
$P_{\R}(T^*X)$, as computed in the proof of Lemma \ref{boundarymfd}.
Thus the pushforward of $b$ to $P_{\R}(T^*X)$
is $b(b^ru+b^{r-1}\s^1u+\cdots+\s^ru)=
b^{r+1}u+b^r\s^1u+\cdots+b\, \s^ru$.

It remains to push this class forward via the $S^1$-bundle
$h\colon P_{\R}(T^*X)\arrow P_{\C}(T^*X)$. We recall from the proof
of Lemma \ref{boundarymfd} that $h^*e=b^2$, $h_*1=0$,
and $h_*b=1$. Thus, for $Z$ of even codimension
$r=2a$, the boundary of $b[S^2Z-Z]$ in $H^{4a}(E_X,\F_2)$ is
$$h_*(b^{2a+1}u+b^{2a}\s^1u+\cdots+b\, \s^{2a}u) =
e^{a}u+e^{a-1}\s^2u+\cdots+\s^{2a}u.$$
For $Z$ of codimension $r=2a+1$, the boundary of
$b[S^2Z-Z]$ in $H^{4a+2}(E_X,\F_2)$ is
$$h_*(b^{2a+2}u+b^{2a+1}\s^1u+\cdots+b\, \s^{2a+1}u)=
e^a\s^1u+e^{a-1}\s^3u+\cdots+\s^{2a+1}u.$$
\end{proof}

Next, we prove the same formulas for any $\F_2$-cohomology class
on $X$, not necessarily the class of a submanifold. We can view
any cohomology class on a manifold as the class of a pseudomanifold,
that is, a closed piecewise linear subspace that is a manifold outside
a closed subset of real codimension at least 2.

\begin{lemma}
\label{boundary}
Let $X$ be a complex manifold, and let $u$ be an element
of $H^r(X,\F_2)$ for some $r$. Consider $u$ as the class
of a closed pseudomanifold
$Z$ in $X$. Then the boundary in $H^{2r-1}(E_X,\F_2)$
of the class $[S^2Z-Z]$
in $H^{2r}(S^2X-X,\F_2)$ is
$$\begin{cases}
e^{a-1}\s^1u+e^{a-2}\s^3u+\cdots+\s^{2a-1}u& \text{if }r=2a,\\
e^au+e^{a-1}\s^2u+\cdots+\s^{2a}u &\text{if }r=2a+1.
\end{cases}$$
Also, the boundary in $H^{2r}(E_X,\F_2)$
of the product $b[S^2Z-Z]$
in $H^{2r+1}(S^2X-X,\F_2)$ is
$$\begin{cases}
e^{a}u+e^{a-1}\s^2u+\cdots+\s^{2a}u &\text{if }r=2a,\\
e^a\s^1u+e^{a-1}\s^3u+\cdots+\s^{2a+1}u &\text{if }r=2a+1.
\end{cases}$$
\end{lemma}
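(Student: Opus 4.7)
The plan is to repeat the proofs of Lemmas \ref{boundarymfd} and \ref{boundaryb} essentially verbatim, but with $Z$ a pseudomanifold and all normal-bundle calculations carried out on its smooth locus. A pseudomanifold $Z$ of real codimension $r$ in $X$ is smooth outside a closed subset $\Sigma\subset Z$ of real codimension at least $2$ in $Z$, hence at least $r+2$ in $X$. On the smooth locus $Z^\circ = Z\setminus\Sigma$ we have an honest normal bundle $N_{Z^\circ/X}$, and by Thom \cite{Thomsteenrod} the formula
\[
\s^j u = s_* w_j(N_{Z^\circ/X}) \in H^{r+j}(X,\F_2),
\]
with $s\colon Z^\circ\arrow X$ the inclusion, still represents the Steenrod squares of $u=[Z]$, the pushforward being well-defined because $\Sigma$ has sufficiently high codimension.

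First I would verify that $[S^2Z-Z]$ and $b[S^2Z-Z]$ are well-defined classes in $H^*(S^2X-X,\F_2)$ for pseudomanifold $Z$, and that the boundary of $[S^2Z-Z]$ in $H^{2r-1}(E_X,\F_2)$ is represented by the pushforward of $1$ along the closure of the composition
\[
P_{\R}(T^*Z^\circ)\inj P_{\R}(T^*X)|_{Z^\circ}\inj P_{\R}(T^*X)\surj P_{\C}(T^*X)=E_X.
\]
The codimension estimate on $\Sigma$ ensures that the singular locus contributes only in degrees above those relevant to the formulas, so these geometric representatives behave as in the smooth case.

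The main computation then proceeds line by line as in the proofs of Lemmas \ref{boundarymfd} and \ref{boundaryb}. On $P_{\R}(T^*X)|_{Z^\circ}$, the $\F_2$-cohomology class of $P_{\R}(T^*Z^\circ)$ is the top Stiefel--Whitney class $w_r(O(1)\otimes\rho^*N_{Z^\circ/X}) = b^r+b^{r-1}w_1(N_{Z^\circ/X})+\cdots+w_r(N_{Z^\circ/X})$. Pushing forward to $P_{\R}(T^*X)$ and applying Thom's formula converts this to $b^r u+b^{r-1}\s^1u+\cdots+\s^r u$. Finally, pushing forward via the $S^1$-bundle $h\colon P_{\R}(T^*X)\arrow P_{\C}(T^*X)$ with $h_*1=0$ and $h_*b=1$ yields the stated formulas for $[S^2Z-Z]$ for each parity of $r$; premultiplying by $b$ and using $h_*b^2=e$, $h_*b=1$ yields the formulas for $b[S^2Z-Z]$.

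The main obstacle is justifying Thom's formula and the geometric representation of the boundary for pseudomanifolds rather than smooth submanifolds. This is handled uniformly by the codimension bound on $\Sigma$: its lifts in $S^2X-X$, $P_{\R}(T^*X)$, and $P_{\C}(T^*X)$ all have real codimension at least $r+2$, so all the cohomology classes appearing in the computation are insensitive to modifications supported on $\Sigma$, and the entire calculation descends from the smooth locus without correction terms. Once this is checked, the pseudomanifold case becomes a direct transcription of the smooth case already established.
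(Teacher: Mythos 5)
Your strategy---work on the smooth locus $Z^{\circ}=Z\setminus\Sigma$ and discard $\Sigma$ by a codimension count---has a genuine gap: the codimension bound on $\Sigma$ is far too weak for the degrees in which the computation takes place. The singular set $\Sigma$ has real codimension at least $r+2$ in $X$, so the restriction $H^k(X,\F_2)\arrow H^k(X-\Sigma,\F_2)$ is guaranteed to be an isomorphism only for $k\leq r$ and injective only for $k\leq r+1$; in degrees $k\geq r+2$ it is uncontrolled. But the classes you need live in degrees up to $2r+1$. In particular, $s_*w_j(N_{Z^{\circ}/X})$ is naturally an element of $H^{r+j}(X-\Sigma,\F_2)$ (the inclusion $Z^{\circ}\arrow X$ is not proper; only $Z^{\circ}\arrow X-\Sigma$ is), and for $j\geq 2$ such a class neither lifts canonically to $X$ nor is detected by $H^{r+j}(X,\F_2)$. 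So the identity $\s^ju=s_*w_j(N_{Z^{\circ}/X})$ does not even have a well-defined right-hand side in $H^*(X,\F_2)$, let alone hold; Thom's theorem is about closed smooth submanifolds. The same count defeats the final step: the boundary classes live in $H^{2r-1}(E_X,\F_2)$ and $H^{2r}(E_X,\F_2)$, while the preimage of $\Sigma$ in $E_X$ has codimension only $r+2$, so a correction term supported over $\Sigma$ can be nonzero in degree $2r$; the assertion that ``the calculation descends from the smooth locus without correction terms'' is precisely what needs proof, and it fails on degree grounds for every $r\geq 2$. There is also a secondary problem: a pseudomanifold can be locally wild along $\Sigma$, so the closure of $P_{\R}(T^*Z^{\circ})$ in $P_{\R}(T^*X)$ need not be a cycle whose pushforward represents the boundary of $[S^2Z-Z]$.

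The paper's proof avoids all of this by stabilizing rather than localizing: it first replaces the pseudomanifold by a closed smooth manifold $Z$ with a proper map to $X$ (Thom's mod $2$ representability theorem), then perturbs the composite $Z\arrow X\times\P^N$ to a proper embedding for $N$ large (Whitney), applies Lemmas \ref{boundarymfd} and \ref{boundaryb} to the resulting honest submanifold of $X\times\P^N$, and descends the formula to $E_X$ using the injectivity of the pushforward $H^*(E_X,\F_2)\arrow H^*(E_{X\times\P^N},\F_2)$ together with the Cartan formula and $\s^j(v^N)=0$ for $j>0$. Some maneuver of this kind, replacing the singular representative by a smooth one in a larger ambient space, appears unavoidable; your argument as written does not close the gap.
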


\begin{proof}
By Thom, the $\F_2$-homology of any space $X$ is generated
by classes of closed (unoriented) $C^{\infty}$ manifolds $Z$
with continuous maps
$Z\arrow X$ \cite[Th\'eor\`eme III.2]{Thomcobordism}.
When $X$ is a manifold, Thom
also showed that $H_*(X,\F_2)$ is not always generated
by submanifolds; that is, we cannot always take the maps $Z\arrow X$
to be embeddings \cite[p.~46]{Thomcobordism}. For a locally compact
space $X$, Thom's argument shows that the Borel-Moore homology of $X$
with $\F_2$ coefficients is generated by $C^{\infty}$ manifolds $Z$
with proper maps $Z\arrow X$. (The Borel-Moore homology of a locally
compact space can be defined as singular homology with locally finite
chains. For a survey, see Fulton \cite[section 19.1]{Fulton}.)

Let $X$ be a complex manifold of complex dimension $n$.
By Thom's theorem,
it suffices to prove the lemma for the class $u$ in $H^r(X,\F_2)$
of a $C^{\infty}$ manifold $Z$ of real dimension $2n-r$ with a proper
map $Z\arrow X$. The idea is that for $N$ large enough, the composition
$Z\arrow X\inj X\times \P^N$ associated to a point in complex
projective space $\P^N$
can be approximated by a proper $C^{\infty}$ embedding, by Whitney. (Namely,
it suffices that $\dim_{\R}(X\times \P^N)\geq 2\dim_{\R}(Z)+1$.)
Perturbing $Z$ in this way does not change the class of $S^2Z-Z$
in $H^*(S^2(X\times\P^N)-X\times \P^N,\F_2)$.

Let $v$ be the generator of $H^2(\P^N,\F_2)$; then $v^N$ is the class
of a point on $\P^N$, and so the class of $Z$ on $X\times \P^N$
is $uv^N$. Lemmas \ref{boundarymfd} and \ref{boundaryb} compute
the boundary of the classes $[S^2Z-Z]$ and $b[S^2Z-Z]$
in $H^*(E_{X\times \P^N},\F_2)$, whether $r$ is even or odd.
For example, suppose $r=2a$ and look at the boundary of
$b[S^2Z-Z]$; the argument is completely analogous in the other three cases.
The boundary of $b[S^2Z-Z]$ in $H^{4a+4N}(E_{X\times \P^N},\F_2)$ is
$e^{a+N}uv^N+e^{a+N-1}\s^2(uv^N)+\cdots$. Since $v^N$ is in the top-degree
cohomology group $H^{2N}(\P^N,\F_2)$, we have $\s^j(v^N)=0$ for all $j>0$.
By the Cartan formula $\s^i(xy)=\sum_{j=0}^i \s^j(x)\s^{i-j}(y)$
\cite[section 4.L]{Hatcher},
the boundary of $b[S^2Z-Z]$ in $H^{4a+4N}(E_{X\times \P^N},\F_2)$
can be rewritten as
$e^{a+N}uv^N+e^{a+N-1}\s^2(u)v^N+\cdots$.

We want to compute the boundary of $[S^2Z-Z]$ in $H^{4a}(E_X,\F_2)$.
Clearly this element pushes forward to the boundary of $[S^2Z-Z]$
in $H^{4a+4N}(E_{X\times \P^N},\F_2)$. Since $E_X$ is the complex
projective bundle $P(T^*X)$
and $E_{X\times\P^N}$ is $P(T^*(X\times \P^N))$, it is straightforward
to check that this pushforward homomorphism is injective. So to show
that the boundary of $b[S^2Z-Z]$ in $H^{4a}(E_X,\F_2)$ is
$e^au+e^{a-1}\s^1u+\cdots$ as we want, it suffices to show that
the latter element pushes forward to 
$e^{a+N}uv^N+e^{a+N-1}\s^2(u)v^N+\cdots$.

We can factor the inclusion we are considering as $P(T^*X)
\inj P(T^*(X\times \P^N))|_X\inj P(T^*(X\times \P^N))$.
Here $p\colon P(T^*X)\arrow P(T^*(X\times \P^N))|_X$
is the zero set of a transverse section
of the complex vector bundle $O(1)\otimes N_{X/X\times \P^N}
=O(1)^{\oplus N}$ over $P(T^*(X\times \P^N))|_X$. So
$p_*1$ is the top Chern
class $c_N(O(1)^{\oplus N})=e^N$. So
\begin{align*}
p_*(e^au+e^{a-1}\s^1u+\cdots)&=(p_*1)(e^au+e^{a-1}\s^1u+\cdots)\\
&= e^{a+N}u+e^{a+N-1}\s^2u+\cdots.
\end{align*}
Next, we push this class forward by $q\colon
P(T^*(X\times\P^N))|_X\inj P(T^*(X\times\P^N))$. Here
$q_*1=v^N$. So
\begin{align*}
q_*(e^{a+N}u+e^{a+N-1}\s^1u+\cdots)&=(q_*1)(e^{a+N}u+e^{a+N-1}\s^1u+\cdots)\\
&= e^{a+N}uv^N+e^{a+N-1}\s^2(u)v^N+\cdots.
\end{align*}
By the previous paragraph, this proves that the formulas we want
hold in $H^*(E_X,\F_2)$.
\end{proof}

\section{Cohomology of the configuration space, and proof
of Theorem \ref{kernel}}

Milgram, L\"offler, B\"odigheimer, Cohen, and Taylor
computed the $\F_2$-homology of
the configuration space $B(X,a)$ of subsets of $X$
of order $a$ in terms of the $\F_2$-homology of $X$
and the dimension of $X$, for any compact manifold $X$
(possibly with boundary)
and any natural number $a$ \cite{ML,BCT}. Since we need explicit
generators for the cohomology of $B(X,2)=S^2X-X$,
we compute this cohomology directly for $X$ a closed manifold in Theorem
\ref{config}, not relying on their work. It would
be interesting to compute the ring structure on the $\F_2$-cohomology
of the configuration spaces $B(X,a)$
for manifolds $X$.

As a tool, we use the calculation of the homology
of symmetric products by Nakaoka and Milgram \cite{Milgram},
as follows. We need a statement (unlike Theorem \ref{symmtorsionfree})
which does not require $X$ to have torsion-free
integral cohomology. Let $f\colon X\times X\arrow S^2X$
be the obvious map.

\begin{theorem}
\label{symmprod}
Let $X$ be the complement of a closed subcomplex in a finite CW-complex.
Let $u_0,\ldots,u_s$ be a basis for $H_*^{BM}(X,\F_2)$ over $\F_2$.
Then $H_*^{BM}(S^2X,\F_2)$ has a basis consisting of the element
$f_*(u_i\otimes u_j)$
in degree $|u_i|+|u_j|$ for each $i<j$,
one element in each degree
$$|u_i|+2,|u_i|+3,\ldots,2|u_i|$$
for each i with $|u_i|>0$, and one element in degree 0
for each $i$ with $|u_i|=0$. 
\end{theorem}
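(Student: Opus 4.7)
The plan is to invoke the Nakaoka--Milgram computation of $H_*(S^2 X,\F_2)$ \cite{Milgram} and to identify the explicit basis described. Their original theorem is stated for ordinary homology of finite CW-complexes; I would extend it to Borel--Moore homology via a compactification.

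First I would set up the reduction. Choose a finite CW-complex $\bar X$ containing $X$ as an open subcomplex, with $Y := \bar X - X$ a closed subcomplex. Then $S^2 X$ is open in the compact space $S^2 \bar X$, with closed complement $C := f(Y \times \bar X) = S^2\bar X - S^2 X$. Since $H_*^{BM}(S^2 X,\F_2) = H_*(S^2 \bar X, C, \F_2)$, this reduces the computation to the long exact sequence
\[
\cdots \to H_n(C) \to H_n(S^2\bar X) \to H_n^{BM}(S^2 X) \to H_{n-1}(C) \to \cdots,
\]
with $\F_2$ coefficients throughout. The Nakaoka--Milgram theorem computes $H_*(S^2\bar X,\F_2)$, and an analogous description applies to $H_*(C,\F_2)$, since $C$ is stratified by $S^2 Y$ and the image of $Y \times X$. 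Choosing bases of $H_*(\bar X,\F_2)$ and $H_*(Y,\F_2)$ compatibly so that they induce the given basis $\{u_i\}$ of $H_*^{BM}(X,\F_2)$ then makes the extraction of the basis of $H_*^{BM}(S^2 X,\F_2)$ from the long exact sequence straightforward.

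The second main step is to identify explicit generators. The off-diagonal classes $f_*(u_i\otimes u_j)$ for $i<j$ come directly from K\"unneth on $X\times X$ via the pushforward $f_*$. For each $u_i$ of positive degree, the ``diagonal tower'' classes in degrees $|u_i|+2,\ldots,2|u_i|$ arise from the $\Sigma_2$-equivariant structure on $X\times X$: concretely, via the Borel construction $E\Sigma_2 \times_{\Sigma_2} X^2 \to S^2 X$ applied to the diagonal class $u_i\otimes u_i$, which is the Dyer--Lashof/Steenrod-square content of Nakaoka--Milgram. Degree-$0$ basis elements contribute only a single class in degree $0$ each, since the tower is empty when $|u_i|=0$.

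The main obstacle will be the bookkeeping: verifying that after accounting for the contribution of $C$ in the long exact sequence, what remains of the Nakaoka--Milgram basis of $S^2\bar X$ matches exactly the claimed list for $H_*^{BM}(S^2 X,\F_2)$. Tracking which diagonal tower classes end up supported in $C$ versus $S^2 X$, as a function of the choice of compatible bases for $H_*(\bar X,\F_2)$, $H_*(Y,\F_2)$, and $H_*^{BM}(X,\F_2)$, is the delicate part of the argument.
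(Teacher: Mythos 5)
Your first step (reduce to the compact case via a compactification and a long exact sequence) and your identification of the generators are both in the spirit of the paper, which likewise quotes Nakaoka--Milgram for finite complexes (reducing via Dold's theorem to wedges of spheres, and using Milgram's Pontrjagin-product computation to recognize the classes $f_*(u_i\otimes u_j)$). The gap is in how you propose to handle the complement. With your compactification $\bar X\supset X$ and $Y=\bar X-X$, the closed complement of $S^2X$ in $S^2\bar X$ is $C=f(Y\times\bar X)$, and your plan needs $H_*(C,\F_2)$ together with the map $H_*(C,\F_2)\arrow H_*(S^2\bar X,\F_2)$. But $C$ is not a symmetric product, so Nakaoka--Milgram says nothing about it; the assertion that ``an analogous description applies\dots since $C$ is stratified by $S^2Y$ and the image of $Y\times X$'' is not a computation --- it would require a further localization sequence for the pair $(C,S^2Y)$ with its own unknown boundary maps, and afterwards you would still have to identify the map $H_*(C)\arrow H_*(S^2\bar X)$ on the diagonal-tower classes. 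Moreover, ``compatible bases'' of $H_*(\bar X,\F_2)$ and $H_*(Y,\F_2)$ inducing the given $u_i$ need not exist, since $H_*(Y,\F_2)\arrow H_*(\bar X,\F_2)$ is in general neither injective nor surjective. This is exactly the step you flag as delicate, and as written the proposal gives no way to carry it out.

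The paper sidesteps all of this with one observation you are missing: collapse $Y$ to a point \emph{before} taking symmetric products. Writing $W=\bar X/Y$, a finite CW-complex, one has $X=W-\{p_0\}$ and hence $S^2X=S^2W-W$, where $W\inj S^2W$ is the map $x\mapsto x+p_0$; the complement is thus a single copy of $W$ rather than the complicated space $C$. Steenrod's theorem (quoted in the paper from Dold's article) says that $x\mapsto x+p_0$ is injective on $\F_2$-homology, so the long exact sequence of the pair degenerates to
$$H_j^{BM}(S^2X,\F_2)\cong H_j(S^2W,\F_2)/H_j(W,\F_2),$$
and the basis is read off from the compact case applied to $W$ (note $H_*^{BM}(X,\F_2)\cong H_*(W,\{p_0\};\F_2)$, so the $u_i$ together with the class of $p_0$ give a basis of $H_*(W,\F_2)$). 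Replacing your analysis of $C$ by this reduction, the rest of your outline goes through.
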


\begin{proof}
First suppose that $X$ is a finite CW-complex, so that Borel-Moore
homology coincides with homology in the usual sense.
Dold showed that the $\F_p$-homology of a symmetric product
$S^aX$ (as a graded vector space) is determined by the $\F_p$-homology
of $X$ \cite[Theorem 7.2]{Dold}. So to compute the $\F_2$-homology of $S^2X$,
it suffices to compute this when $X$ is a wedge (one-point union)
of spheres. That easily reduces to the case of a single sphere $X$, where
the calculation of $H_*(S^2X,\F_2)$
was made by Nakaoka. For any finite CW-complex $X$,
the identification of some of the generators
as pushforwards $f_*(u_i\otimes u_j)$ is part of Milgram's
calculation of the ``Pontrjagin product'' on symmetric products,
the action on homology
of the natural maps $S^aX\times S^bX\arrow S^{a+b}X$
\cite[Theorem 5.2]{Milgram}.

More generally, let $X=Y-Z$ for some finite CW-complex $Y$
and closed subcomplex $Z$. Then $X$ is the complement of a point $p_0$
in the quotient space $Y/Z$, which is a finite CW-complex. So
$S^2X=S^2(Y/Z)/(Y/Z)$, where the inclusion $Y/Z\arrow S^2(Y/Z)$
is the map $x\mapsto x+p_0$. Steenrod showed that this inclusion
induces an injection on $\F_2$-homology
\cite[Theorem 2]{Dold}. So the exact sequence
$$H_j(Y/Z)\arrow H_jS^2(Y/Z)\arrow H_j^{BM}S^2X\arrow H_{j-1}(Y/Z)$$
with $\F_2$ coefficients
determines the Borel-Moore homology of $S^2X$ with $\F_2$ coefficients
from the results of Nakaoka and Milgram.
\end{proof} 

Using that, we give an explicit basis for the cohomology of $S^2X-X$.

\begin{theorem}
\label{config}
Let $X$ be a $C^{\infty}$ manifold of real dimension
$m$. Assume that $X$ can be compactified by a finite CW-complex
with complement a closed subcomplex; this is automatic
for $X$ compact. Let $v_0,\ldots,v_s$
be a basis for $H^*(X,\F_2)$, and let $Z_i$ be a closed pseudomanifold
in $X$ that represents the class $v_i$. Let $b$
in $H^1(S^2X-X,\F_2)$ be the class of the double cover
$g\colon X\times X-X\arrow S^2X-X$. Then a basis
for $H^*(S^2X-X,\F_2)$ is given by the elements
$g_*(v_i\otimes v_j)$ in degree $|v_i|+|v_j|$ for $i<j$,
together with the elements
$b^j[S^2Z_i-Z_i]$ in degree $2|v_i|+j$
for all $i$ and all $0\leq j\leq m-1-|v_i|$.
\end{theorem}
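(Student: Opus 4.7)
The plan is to combine a dimension count, obtained from Theorem \ref{symmprod} via the localization sequence, with a linear-independence check based on the double cover $g\colon X\times X-\Delta\arrow S^2X-X$ and its transfer exact sequence.

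First I would determine $\dim_{\F_2}H^k(S^2X-X,\F_2)$ in each degree $k$ from Theorem \ref{symmprod} together with the localization long exact sequence in $\F_2$ Borel-Moore homology for the closed embedding $\Delta\colon X\inj S^2X$,
$$\cdots\arrow H_j^{\BM}(X,\F_2)\xrightarrow{\Delta_*}H_j^{\BM}(S^2X,\F_2)\arrow H_j^{\BM}(S^2X-X,\F_2)\arrow H_{j-1}^{\BM}(X,\F_2)\arrow\cdots,$$
and Poincar\'e duality $H_j^{\BM}(S^2X-X,\F_2)\cong H^{2m-j}(S^2X-X,\F_2)$ for the open manifold $S^2X-X$ of real dimension $2m$. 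The crucial input is that $\Delta_*$ is injective, which I would verify by identifying $\Delta_*u_i$ with the specific ``diagonal'' element of the Nakaoka-Milgram basis in degree $|u_i|$. The long exact sequence then breaks into short exact sequences, and a direct count confirms that the resulting $\dim H^k(S^2X-X,\F_2)$ matches the total number of proposed basis elements of degree $k$.

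Second I would prove linear independence via the transfer exact sequence for $g$,
$$\cdots\arrow H^{j-1}(S^2X-X,\F_2)\xrightarrow{\cdot b}H^j(S^2X-X,\F_2)\xrightarrow{g^*}H^j(X\times X-\Delta,\F_2)\xrightarrow{g_*}H^j(S^2X-X,\F_2)\arrow\cdots.$$
Applying $g^*$ to the proposed basis gives $g^*g_*(v_i\otimes v_j)=v_i\otimes v_j+v_j\otimes v_i$ (the swap-orbit sum) and $g^*[S^2Z_i-Z_i]=v_i\otimes v_i$ restricted to $X\times X-\Delta$. These $\Z/2$-invariant classes are linearly independent in $H^*(X\times X-\Delta,\F_2)$ by comparison with the K\"unneth basis of $H^*(X\times X,\F_2)$ via the localization sequence for $\Delta\subset X\times X$. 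Since $g^*b=0$ (the pulled-back cover is trivial), the classes $b^j[S^2Z_i-Z_i]$ with $j\geq 1$ lie in $\ker g^*=\im(\cdot b)$, so their independence reduces to showing the top power $b^{m-1-|v_i|}[S^2Z_i-Z_i]$ is nonzero. This I would derive from the projection formula $b^j[S^2Z_i-Z_i]=\iota_!((\iota^*b)^j)$ for $\iota\colon S^2Z_i-Z_i\inj S^2X-X$, using induction on $\dim X$ applied to $Z_i$, with base case the local model $S^2\R^m-\R^m\simeq\R\P^{m-1}$ in which $b^{m-1}\neq 0$.

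The main obstacle is the bookkeeping in the first step: one must pin down $\Delta_*H_*^{\BM}(X,\F_2)$ inside the Nakaoka-Milgram basis so that injectivity becomes manifest and the cokernel matches the proposed basis count in each degree. An alternative would avoid Borel-Moore homology altogether by computing $H^*(X\times X-\Delta,\F_2)$ directly via the Thom isomorphism for $\Delta\subset X\times X$, and then splitting $H^*(S^2X-X,\F_2)$ via the transfer sequence into the image of $g_*$ and the image of multiplication by $b$, identifying these summands with the two types of classes in the statement.
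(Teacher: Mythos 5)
Your overall strategy (a dimension count from Theorem \ref{symmprod} via the localization sequence, plus a separate independence check) matches the paper's, but both of your key steps contain genuine errors. The first and most serious: the diagonal pushforward $\Delta_*\colon H_j^{\BM}(X,\F_2)\arrow H_j^{\BM}(S^2X,\F_2)$ is not injective --- it is \emph{zero} in positive degrees, and proving this vanishing is the real content of the dimension count. (The paper proves it by mapping to the infinite symmetric product $S^{\infty}X$, using Dold--Thom to identify $H^*(S^{\infty}X,\F_2)$ as a primitively generated Hopf algebra on which the multiplication-by-$2$ map vanishes in positive degrees, together with Steenrod's theorem that $S^2X\arrow S^{\infty}X$ is injective on $\F_2$-homology.) There is also no ``diagonal'' Nakaoka--Milgram basis element in degree $|u_i|$ with which to identify $\Delta_*u_i$: the diagonal-type generators occupy degrees $|u_i|+2,\ldots,2|u_i|$. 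With injectivity in place of vanishing, your exact sequence would give $\dim H_j^{\BM}(S^2X-X)=\dim H_j^{\BM}(S^2X)-\dim H_j^{\BM}(X)$ rather than the correct $\dim H_j^{\BM}(S^2X)+\dim H_{j-1}^{\BM}(X)$, and the count would not match the asserted basis. Concretely, for $X=S^2$ one has $S^2X=\C\P^2$ with diagonal a conic, so $\Delta_*[X]=2h=0$ in $H_2(\C\P^2,\F_2)$; and $S^2X-X\simeq\R\P^2$ has $\F_2$-Betti numbers $1,1,1$, consistent with $\Delta_*=0$ and inconsistent with injectivity (which would force $H^2(S^2X-X,\F_2)=0$).

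The independence step also fails as stated. The classes $v_i\otimes v_j+v_j\otimes v_i$ and $v_i\otimes v_i$ are \emph{not} linearly independent after restriction to $X\times X-\Delta$: the kernel of that restriction is the image of the Gysin map for $\Delta$, which contains the nonzero symmetric class $[\Delta]=\sum_i v_i\otimes v_i^{\vee}$, a combination of exactly the classes you list. (Again for $X=S^2$: $g^*g_*(1\otimes v)=1\otimes v+v\otimes 1=[\Delta]|_{X\times X-\Delta}=0$, matching the fact that $g_*(1\otimes v)=b^2$ there, which of course dies under $g^*$.) So applying $g^*$ cannot separate the classes $g_*(v_i\otimes v_j)$ from powers of $b$. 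The paper's independence argument is different: it restricts to the boundary $P_{\R}(T^*X)$ of a compactification of $S^2X-X$, where each $g_*(v_i\otimes v_j)$ dies (since $g_*1=0$ for a double cover) while $b^j[S^2Z_i-Z_i]$ restricts to $b^j(b^ru+b^{r-1}\s^1u+\cdots)$, visibly independent in the free module $H^*(P_{\R}(T^*X),\F_2)$ over $H^*(X,\F_2)$ with basis $1,b,\ldots,b^{m-1}$; the independence of the $g_*(v_i\otimes v_j)$ themselves then follows from Theorem \ref{symmprod} together with, once more, the vanishing of $\Delta_*$. Your closing induction for the nonvanishing of $b^{m-1-|v_i|}[S^2Z_i-Z_i]$ becomes unnecessary once such a restriction argument is in place.
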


\begin{proof}
We compute the $\F_2$-Betti numbers of $S^2X-X$ by reducing
to the better-understood homology of symmetric products.
Namely, we have an
exact sequence of Borel-Moore homology
groups with $\F_2$ coefficients:
$$\arrow H_i^{BM}X\arrow H_i^{BM}S^2X
\arrow H_i^{\BM}(S^2X-X)\arrow H_{i-1}^{BM}X\arrow.$$
By Poincar\'e duality, $H_i^{\BM}(S^2X-X,\F_2)\cong H^{2m-i}(S^2X-X,\F_2)$;
so the $\F_2$-Betti numbers of $S^2X-X$ are determined by the pushforward
homomorphism associated to the diagonal inclusion
$\Delta\colon X\arrow S^2X$.

In fact, this homomorphism is zero in positive degrees. To see this,
note that it suffices to prove this for $X$ a finite CW-complex,
by the proof of Theorem \ref{symmprod}. We can also assume
that $X$ is connected.
Fix a base point $p_0$ in $X$. This determines a sequence of inclusions
$$X\arrow S^2X\arrow S^3X\arrow \cdots$$
given by adding the point $p_0$. (Do not confuse this map $X\arrow S^2X$,
$x\mapsto x+p_0$, with the diagonal inclusion, $x\mapsto 2x$.)
The colimit of this sequence is called
the {\it infinite symmetric product }$S^{\infty}X$. It can
be viewed as a topological commutative monoid, with the homotopy type
of the product of Eilenberg-MacLane spaces $\prod_{j>0} K(H_j(X,\Z),j)$,
by Dold and Thom \cite[section 4.K]{Hatcher}. This product decomposition
is compatible with the addition on $S^{\infty}X$, up to homotopy.
Moreover, by Steenrod, all the maps
$X\arrow S^2X\arrow \cdots\arrow S^{\infty}X$ given by adding $p_0$
give injections on $\F_2$-homology \cite[Theorem 2]{Dold}.

The Dold-Thom theorem implies that $R:=H^*(S^{\infty}X,\F_2)$
is a primitively generated Hopf algebra, with generators given by applying
Steenrod operations to generators of $H^j(K(H_j(X,\Z),j),\F_2)$. The
multiplication by 2 map on $S^{\infty}X$ is the composition
of the diagonal $S^{\infty}X\arrow S^{\infty}X\times S^{\infty}X$
with the addition $S^{\infty}X\times S^{\infty}X\arrow S^{\infty}X$,
and so the corresponding pullback homomorphism on $R$
is the composition
of the coproduct $R\arrow R\otimes R$ with
the product $R\otimes R\arrow R$. Pulling back the multiplication by 2
map sends a primitive class $x$ in $R$ to zero (as $x\mapsto 1\otimes x+
x\otimes 1\mapsto 2x=0$). Since $R$ is primitively generated,
the multiplication by 2 map induces zero on $R$ in positive degrees.
Equivalently, the multiplication by 2 map
induces zero on the $\F_2$-homology of $S^{\infty}X$
in positive degrees. By the commutative diagram
$$\xymatrix{
X\ar[r]^{\Delta}\ar[d] & S^2X\ar[d]\\
S^{\infty}X\ar[r]^{2} & S^{\infty}X,
}$$
the composition of the diagonal map $\Delta\colon X\arrow S^2X$
with the inclusion
$S^2X\arrow S^{\infty}X$ induces zero on $\F_2$-homology on positive degrees.
By Steenrod's theorem (above), it follows that $\Delta$ induces zero on
$\F_2$-homology in positive degrees, as we want.

We return to considering a $C^{\infty}$ manifold $X$, possibly noncompact.
We know the
Borel-Moore homology of $S^2X$ with $\F_2$ coefficients
from Theorem \ref{symmprod}. Together
with the previous paragraph, this determines the cohomology
of $S^2X-X$ with $\F_2$ coefficients, by the exact sequence
$$H_i^{BM}X\arrow H_i^{BM}S^2X\arrow H^{2m-i}(S^2X-X)\arrow H_{i-1}^{BM}X.$$
Namely, given a basis $v_0,\ldots,v_s$ for $H^*(X,\F_2)$,
$H^*(S^2X-X,\F_2)$ has a basis with one element in degree $|v_i|+|v_j|$
for all $i<j$ and one element in each degree
$$2|v_i|,2|v_i|+1,\ldots,|v_i|+m-1$$
for each $i$.

We want to show that a basis for $H^*(S^2X-X,\F_2)$ is given by
the classes
$g_*(v_i\otimes v_j)$ for $i<j$
together with the elements
$b^j[S^2Z_i-Z_i]$
for all $i$ and all $0\leq j\leq m-1-|v_i|$. Since we know the dimension
of $H^*(S^2X-X,\F_2)$, it suffices to show
that these elements are linearly independent over $\F_2$.

To see this, think of $S^2X-X$ as the interior of a manifold
with boundary, where the boundary is the real projective bundle
$P_{\R}(T^*X)$. This gives a restriction homomorphism
$$H^*(S^2X-X,\F_2)\arrow H^*(P_{\R}(T^*X),\F_2).$$
I claim that the elements $g_*(v_i\otimes v_j)$ restrict to
zero on $P_{\R}(T^*X)$. For this, think of $X\times X-X$
as the interior of a manifold with boundary,
where the boundary is the unit sphere bundle
$S_{\R}(TX)$ inside $TX$. Because the cohomology class $v_i\otimes v_j$
on $X\times X-X$ extends to $X\times X$, the restriction
of $v_i\otimes v_j$ to $S_{\R}(TX)$ extends to the unit
disc bundle $D_{\R}(TX)$, which is homotopy equivalent to $X$.
Clearly this restriction of $v_i\otimes v_j$
to $H^*D_{\R}(TX)\cong H^*X$ is $v_iv_j\in H^*X$; so the restriction
of $v_i\otimes v_j$ to $H^*S_{\R}(TX)$ is the pullback of $v_iv_j$.
So the pushforward by the double cover
$g\colon S_{\R}(TX)\arrow P_{\R}(T^*X)$ is
$g_*g^*(u_iu_j)=(g_*1)u_iu_j=0$, where $g_*1=0\in H^0(P_{\R}(T^*X),\F_2)$
since $g$ has degree $0\pmod{2}$. That is, the classes
$g_*(v_i\otimes v_j)$ restrict to zero on $P_{\R}(T^*X)$.

Next, let $u$ be the class in $H^r(X,\F_2)$ of a closed pseudomanifold
$Z$ in $X$. The restriction of $b$ in $H^1(S^2X-X,\F_2)$
to $P_{\R}(T^*X)$ is the Stiefel-Whitney class $b=w_1(O(1)_{\R})$.
By the proof of Lemma \ref{boundarymfd},
the restriction of $[S^2Z-Z]$ from $H^{2r}(S^2X-X,\F_2)$
to $H^{2r}(P_{\R}(T^*X),\F_2)$ is $b^ru+b^{r-1}\s^1u+\cdots$.
(To be precise, Lemma \ref{boundarymfd} proves this when
$Z$ is a closed $C^{\infty}$ submanifold of $X$, but the proof
of Lemma \ref{boundary}, replacing $X$ by a product
$X\times \P^N_{\C}$ (or $X\times S^N$) for $N$ large,
extends this to arbitrary
cohomology classes $u$.) So the element $b^j[S^2Z-Z]$
for $0\leq j\leq m-1-r$ restricts to
$b^j(b^ru+b^{r-1}\s^1u+\cdots)$ on $P_{\R}(T^*X)$.
Since $H^*(P_{\R}(T^*X),\F_2)$
is a free module over $H^*(X,\F_2)$ with basis
$1,b,\ldots,b^{m-1}$, we read off that the elements $b^j(S^2Z_i-Z_i)$
for $v_i=[Z_i]$ running through a basis for $H^*(X,\F_2)$
and $0\leq j\leq m-1-|v_i|$ have linearly independent
restrictions to $P_{\R}(T^*X)$.

By the previous two paragraphs, to show that the given
elements are linearly independent in $H^*(S^2X-X,\F_2)$
and hence form a basis, it suffices to show that
the elements $g_*(v_i\otimes v_j)$ for $i<j$ are linearly independent
in $H^*(S^2X-X,\F_2)$. But this is clear from the exact sequence
with $\F_2$ coefficients:
$$H_a^{BM}X\arrow H_a^{BM}S^2X\arrow H^{2m-a}(S^2X-X)\arrow H_{a-1}^{BM}X.$$
Indeed, the elements $g_*(v_i\otimes v_j)$ in the cohomology of
$S^2X-X$ are the restrictions of the Borel-Moore homology classes
$f_*(v_i\otimes v_j)$ on $S^2X$, where $f\colon X\times X
\arrow S^2X$ is the obvious map. These classes in $H_*^{BM}S^2X$ are
linearly independent for $i<j$ by Theorem \ref{symmprod}.
Since the diagonal homomorphism $H_a^{BM}X\arrow H_a^{BM}S^2X$
is zero in positive
degrees, the elements $g_*(v_i\otimes v_j)$ for $i<j$ are linearly
independent in $H^*(S^2X-X,\F_2)$. Theorem \ref{config}
is proved.
\end{proof}

\begin{proof} (Theorem \ref{kernel})
By the exact sequence of $\F_2$-cohomology groups
$$H^{j+1}X^{[2]}\arrow H^{j+1}(S^2X-X)\arrow H^jE_X\arrow H^{j+2}X^{[2]},$$
the kernel of the pushforward $i_*\colon H^*E_X\arrow H^*X^{[2]}$
is equal to the image of the boundary homomorphism
from $H^*(S^2X-X,\F_2)$. Theorem \ref{config}
gives a basis for $H^*(S^2X-X,\F_2)$, and Lemma \ref{boundary}
computes the boundary of the classes $[S^2Z-Z]$
and $b[S^2Z-Z]$, for a pseudomanifold $Z$ in $X$.
That determines the boundary of all classes $b^j[S^2Z-Z]$,
since $b^2$ in $H^2(S^2X-X,\F_2)$ is the pullback of the
class $e$ in $H^2(X^{[2]},\F_2)$. This gives the elements
of $\ker(i_*)$ listed in Theorem \ref{kernel}.

It remains to show that the boundary of each remaining
basis element $g_*(v_i\otimes v_j)$ for $H^*(S^2X-X,\F_2)$
(where $i<j$) is zero.
By the exact sequence, it suffices to show that these
classes are restrictions of cohomology classes
on $X^{[2]}$. To see this, let $\widetilde{X\times X}$
be the blow-up of $X\times X$ along the diagonal,
and let $g\colon \widetilde{X\times X}
\arrow X^{[2]}$ be the obvious degree-2 map. Since $g$ is a proper map
of manifolds, it induces a pushforward homomorphism
on cohomology. Consider a cohomology class such as $v_i\otimes v_j$
in $H^*(X\times X)$ as a class on $\widetilde{X\times X}$
by pulling back. Then the class $g_*(v_i\otimes v_j)$
on $S^2X-X$ is the restriction of the cohomology class
$g_*(v_i\otimes v_j)$ on $X^{[2]}$. Theorem \ref{kernel}
is proved.
\end{proof}

\section{Torsion-free cohomology}

\begin{proof} (Theorem \ref{torsionfree})
The Adem relations among Steenrod operations imply
that $\s^1\s^{2j}=\s^{2j+1}$ on the $\F_2$-cohomology
of any space \cite[section 4.L]{Hatcher}.
Here $\s^1$ is the Bockstein on $\F_2$-cohomology. Since
we assume that $H^*(X,\Z)$ has no 2-torsion, we have $\s^1=0$ on $H^*(X,\F_2)$,
and hence all odd Steenrod operations are zero.

As a result, Theorem \ref{kernel} gives that the kernel
of the pushforward homomorphism $i_*\colon
H^*(E_X,\F_2)\arrow H^*(X^{[2]},\F_2)$ is spanned by the elements
\begin{align*}
e^j(e^au+e^{a-1}\s^2u+\cdots+\s^{2a}u)
  & \text{ for }|u|=2a,\; 0\leq j\leq n-1-a,
\text{ and}\\
e^j(e^au+e^{a-1}\s^2u+\cdots+\s^{2a}u)
  & \text{ for }|u|=2a+1,\; 0\leq j\leq n-1-a,
\end{align*}
where $u$ runs through a basis for $H^*(X,\F_2)$.
Since $H^*(E_X,\F_2)$ is a free module over $H^*(X,\F_2)$
with basis $1,e,\ldots,e^{n-1}$, the elements listed
are linearly independent over $\F_2$.

Thus we have a basis for $\ker(i_*)$. By the exact sequence
with $\F_2$ coefficients
$$H^{j+1}X^{[2]}\arrow H^{j+1}(S^2X-X)\arrow H^jE_X\arrow H^{j+2}X^{[2]},$$
we now know the $\F_2$-Betti numbers of $X^{[2]}$. Namely,
let $v_0,\ldots,v_s$ be a basis for $H^*(X,\F_2)$.
Then $H^*(X^{[2]},\F_2)$ has a basis with one element
in degree $|v_i|+|v_j|$ for each $i\leq j$ except
when $i=j$ and $|v_i|$ is odd, together with
one element in each degree 
$$|v_i|+2,|v_i|+4,\ldots,|v_i|+2n-2$$
for each $i$.

Since $H^*(X,\Z)$ has no 2-torsion, the rational cohomology of $X$
has a basis $v_0,\ldots,v_s$ in the same degrees. To show that
$H^*(X^{[2]},\Z)$ has no 2-torsion, it suffices to show
that the rational cohomology of $X^{[2]}$ has a basis
in the same degrees as the basis above for $H^*(X^{[2]},\F_2)$.
Since the Hilbert scheme
$X^{[2]}$ is the quotient by the symmetric group $S_2$
of the blow-up $\widetilde{X\times X}$ along the diagonal,
$H^*(X^{[2]},\Q)$ is the subspace of $S_2$-invariants
in $H^*(\widetilde{X\times X},\Q)$. Since $\widetilde{X\times X}$
is the blow-up of the complex manifold $X\times X$
along the closed complex submanifold $X$ of codimension $n$, we have
$$H^*(\widetilde{X\times X})=H^*(X\times X)\oplus E\cdot H^*X
\oplus\cdots\oplus E^{n-1}\cdot H^*X,$$
where $E$ denotes the class of the exceptional divisor
$E_X$ in $H^2(\widetilde{X\times X})$. Since the nontrivial
element of $S_2$ acts
on $H^*(X\times X,\Q)$ by $v_i\otimes v_j\mapsto (-1)^{|v_i||v_j|}v_j
\otimes v_i$, the $S_2$-invariants in $H^*(X\times X,\Q)$ have
a basis with one element in each degree $|v_i|+|v_j|$ for each
$i\leq j$ except when $i=j$ and $|v_i|$ is odd. The other
summands $E^j\cdot H^*(X,\Q)$ of $H^*(\widetilde{X\times X},\Q)$
are fixed by $S_2$. We conclude that $H^*(X^{[2]},\Q)
=H^*(\widetilde{X\times X},\Q)^{S_2}$ has a basis in the same
degrees as the basis above for $H^*(X^{[2]},\F_2)$. So
the integral cohomology of $X^{[2]}$ has no 2-torsion.

Finally, suppose that $H^*(X,\Z)$ has no torsion. The easy
computation of the rational cohomology of $X^{[2]}$ above
works with $\Z[1/2]$-coefficients. In particular,
the integral cohomology of $X^{[2]}$ has no odd torsion.
By the previous paragraph, it follows that the integral
cohomology of $X^{[2]}$ is torsion-free. Theorem
\ref{torsionfree} is proved.
\end{proof}

\section{Complex submanifolds}

In a special case, the formulas in Theorems \ref{kernel}
and \ref{torsionfree} have a simple geometric explanation,
and that is what led to guessing those formulas in general. Namely,
let $Y$ be a closed complex submanifold of codimension $a$
in a complex manifold $X$,
and let $u$ be the cohomology class of $Y$ in $H^{2a}(X,\F_2)$.
The Hilbert scheme $Y^{[2]}$ is
a closed complex submanifold of codimension $2a$ in $X^{[2]}$. As throughout
the paper, we omit the symbol $\pi^*$ for cohomology
classes on $X$ pulled back to $E_X$.

\begin{lemma}
\label{complex}
The restriction of the cohomology class of $Y^{[2]}$
in $H^{4a}(X^{[2]},\F_2)$ to $E_X$
is
$$e^au + e^{a-1}\s^2u + \cdots + \s^{2a}u.$$
\end{lemma}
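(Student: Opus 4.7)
The plan is to identify $Y^{[2]} \cap E_X$ explicitly as a smooth subvariety, show the intersection is transverse, and then compute the resulting cohomology class on $E_X$ by a two-step Chern class computation together with Thom's theorem for Steenrod squares.

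First I would identify the intersection set-theoretically. A point of $E_X$ is a length-2 subscheme of $X$ supported at a single $x \in X$, equivalently a pair $(x,\ell)$ with $\ell$ a complex line in $T_xX$. Such a point lies in $Y^{[2]}$ precisely when the subscheme lies in $Y$, forcing $x \in Y$ and $\ell \subset T_xY$. Hence $Y^{[2]} \cap E_X = E_Y = P(T^*Y)$ as sets, embedded in $E_X = P(T^*X)$ via $P(T^*Y) \hookrightarrow P(T^*X|_Y) \hookrightarrow P(T^*X)$. Since $Y^{[2]}$ has complex codimension $2a$ in $X^{[2]}$, $E_X$ has codimension $1$, and $E_Y$ has codimension $2a+1$, the intersection occurs in the expected codimension; for smooth complex submanifolds this forces the tangent spaces of $Y^{[2]}$ and $E_X$ to span $T_pX^{[2]}$ at each point of $E_Y$. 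Therefore the intersection is transverse and $i^*[Y^{[2]}] = [E_Y]$ in $H^{4a}(E_X,\F_2)$.

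Next I would compute $[E_Y]$ in $E_X$ via the factorization $E_Y \hookrightarrow E_X|_Y \hookrightarrow E_X$. Over $E_X|_Y$, the composition $O(-1) \hookrightarrow \pi^*TX|_Y \twoheadrightarrow \pi^*N_{Y/X}$ gives a transverse section of the rank-$a$ bundle $O(1) \otimes \pi^*N_{Y/X}$ whose zero locus is exactly $E_Y$. Its top Chern class is $\sum_{j=0}^a (c_1(O(1)))^{a-j}\pi^*c_j(N_{Y/X})$; reducing mod $2$ and using $c_j \equiv w_{2j} \pmod 2$ for complex bundles and $c_1(O(1)) \equiv e \pmod 2$ gives $\sum_{j=0}^a e^{a-j}\pi^*w_{2j}(N_{Y/X})$. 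Pushing forward from $E_X|_Y$ to $E_X$ by the projection formula and base change for the cartesian square with horizontal maps $E_X|_Y \to E_X$ and $s\colon Y \to X$ yields $i_{Y,*}(e^{a-j}\pi^*\alpha) = e^{a-j}\pi^*s_*\alpha$ for any class $\alpha$ on $Y$. Applying this to $\alpha = w_{2j}(N_{Y/X})$ and invoking Thom's identity $s_*w_{2j}(N_{Y/X}) = \s^{2j}u$ (already used in Lemma \ref{boundarymfd}) gives the desired formula $e^au + e^{a-1}\s^2u + \cdots + \s^{2a}u$ (suppressing $\pi^*$).

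The hardest step is the transversality claim. The codimension argument above is formally enough, but to be sure one can verify it by direct computation in local coordinates: choose coordinates on $X$ with $Y = \{x_1 = \cdots = x_a = 0\}$, pass to coordinates $z_i = x_i - y_i$, $w_i = x_i + y_i$ on $X \times X$, blow up the diagonal $\{z_i = 0\}$, and form the $S_2$-quotient; inspecting the tangent spaces of $Y^{[2]}$ and $E_X$ at a point of $E_Y$ in this explicit model shows they span.
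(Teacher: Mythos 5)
Your proposal is correct and follows essentially the same route as the paper: identify $Y^{[2]}\cap E_X$ with $P(T^*Y)\subset P(T^*X)|_Y\subset E_X$, realize it as the zero locus of the section of $O(1)\otimes\pi^*N_{Y/X}$ coming from $O(-1)\subset \pi^*TX|_Y$, take the top Chern class $e^a+e^{a-1}c_1N_{Y/X}+\cdots+c_aN_{Y/X}$ mod $2$, and push forward using Thom's identity $\s^ju=s_*w_j(N_{Y/X})$ together with $w_{2j}\equiv c_j\pmod 2$. The one quibble is that ``set-theoretic intersection in the expected codimension'' does not by itself force transversality for complex submanifolds (tangent curves in a surface meet in the expected dimension without being transverse), but the paper likewise simply asserts transversality here, and your proposed local-coordinate verification would supply the missing justification.
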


\begin{proof}
We have an exact sequence of holomorphic vector bundles on $Y$,
$0 \arrow TY \arrow TX|_Y \arrow N_{Y/X}\arrow 0$.
The exceptional divisor $E_X$ is the complex projective bundle
$P(T^*X)\arrow X$ of
lines in $TX$. The intersection $Y^{[2]}\cap E_X$, which is transverse,
is $W := P(T^*Y)\subset P(T^*X)|_Y\subset P(T^*X)=E_X$. Write $\pi\colon
P(T^*X)|_Y\arrow Y$ for the projection. The submanifold
$W$ is the zero set of a transverse section
of the vector bundle $\Hom(O(1),\pi^*N_{Y/X})$
over $P(T^*X)|_Y$; that section is the one
associated to the subbundle $O(-1) \subset \pi^*TX|_Y$.

So the cohomology class of $W$ on $P(T^*X)|_Y$
is the top Chern class $c_a(O(1)
\otimes N_{Y/X})$.
The top Chern class of the tensor product
of a line bundle $L$ with a vector bundle
$F$ of rank $a$ is
$$c_a(L\otimes F) = (c_1L)^a + (c_1L)^{a-1}c_1F +\cdots+ c_aF.$$
The class $e$ on $X^{[2]}$ restricted to $E_X$
is $e=c_1O(-1)$. So the cohomology class of $W$ on
$P(T^*X)|_Y$ in $\F_2$-cohomology is
$e^a + e^{a-1}c_1N_{Y/X} +\cdots+ c_aN_{Y/X}$.

The Steenrod squares of the class $u = [Y]$
in $H^*(X,\F_2)$ are the pushforward
to $X$ of the Stiefel-Whitney classes of the normal bundle
$N_{Y/X}$ by the inclusion
$s\colon Y \arrow X$,
$$\s^ju = s_*w_j(N_{Y/X}),$$
by Thom \cite{Thomsteenrod}. Since $N_{Y/X}$ is a complex
vector bundle, the odd Stiefel-Whitney classes
are zero and the even Stiefel-Whitney classes are
the Chern classes in $\F_2$-cohomology:
$$w_{2i}N_{Y/X} = c_iN_{Y/X}\pmod{2}$$
\cite[Problem 14-B]{MS}.
We conclude that the class of $W = Y^{[2]}\cap E_X$
in $H^*(E_X,\F_2)$ is
$e^au + e^{a-1}\s^2u +\cdots+\s^{2a}u$.
\end{proof}

To relate this to Theorems \ref{kernel} and \ref{torsionfree},
note that the element $e$ in $H^2(E_X,\F_2)$ is in the image
of restriction $i^*$ from $X^{[2]}$. So Lemma \ref{complex}
implies that for the class $u$ in $H^{2a}(X,\F_2)$ of a
complex submanifold, the image of restriction $i^*$
contains $e^j(e^au+e^{a-1}\s^2u+\cdots +\s^{2a}u)$
for all $j\geq 0$, in particular for all $0\leq j\leq n-1-a$.

Moreover, the class $[E_X]$ in $H^2(X^{[2]},\Z)$ is equal to $2e$,
and so $i_*1=[E_X]=0$ in $H^2(X^{[2]},\F_2)$. 
So $i_*i^*y=(i_*1)y=0$ for all $y$ in $H^*(E_X,\F_2)$. Thus Lemma
\ref{complex} shows that the kernel of $i_*$ contains
$e^j(e^au+e^{a-1}\s^2u+\cdots+\s^{2a}u)$ for all
classes $u$ in $H^{2a}(X,\F_2)$ of complex submanifolds
and all $0\leq j\leq n-1-a$. This calculation suggested
the complete description of the kernel of $i_*$
in Theorems \ref{kernel} and \ref{torsionfree}.


\small \sc UCLA Mathematics Department, Box 951555, Los Angeles, CA 90095-1555

totaro@math.ucla.edu

\end{document}